\newtheorem{theorem}{Theorem}
\newtheorem{lemma}{Lemma}
\journal{}
\begin{document}

\begin{frontmatter}



\title{Linearly Stable Generalizations of ESFR Schemes}


\author[label1]{Mathias Dufresne-Piché\corref{cor1}}
\ead{mathias.dufresne-piche@mail.mcgill.ca}
\cortext[cor1]{Corresponding author.}
\author[label1]{Siva Nadarajah}

\affiliation[label1]{organization={Department of Mechanical Engineering, McGill University},
            city={Montréal},
            postcode={H3A 0G4}, 
            country={Canada}}

\begin{abstract}

The energy stable flux reconstruction (ESFR) method encompasses an infinite family of high-order, linearly stable schemes and thus provides a flexible and efficient framework for achieving high levels of accuracy on unstructured grids. One remarkable property of ESFR schemes is their ability to be expressed equivalently as linearly filtered discontinuous Galerkin (FDG) schemes. In this study, we introduce Sobolev Stable discontinuous Galerkin (SSDG) schemes, a new conservative and linearly stable generalization of ESFR schemes via the FDG framework. Additionally, we review existing generalizations of the ESFR method and consider their relationship with the FDG framework. The linear properties of SSDG schemes are studied via Von Neumann analysis and compared to those of the existing extended ESFR (EESFR) method. It is found that while SSDG and EESFR schemes exhibit fundamentally different dispersive and dissipative behaviors, they can achieve a similar increase in CFL limit and exhibit a similar spectral order of accuracy. Moreover, it is seen that the range of scheme parameters over which SSDG schemes can be used to increase the explicit time-stepping limit is much larger than for EESFR schemes. Finally, it is observed that the order of accuracy of EESFR schemes under $h$-refinement is generally $p+1$ while that of SSDG schemes is $p$.
\end{abstract}

\begin{keyword}

High-order methods
\sep
Flux reconstruction
\sep
Discontinuous Galerkin method
\sep
Von Neumann Analysis
\sep
Energy stability


\end{keyword}

\end{frontmatter}



\section{Introduction}
In the past decades, computational fluid dynamics (CFD) has proven to be a valuable and cost-effective tool to solve complex fluid dynamics problems which arise in a wide range of engineering systems. High-order methods are particularly interesting in that regard as they have the ability to compute more accurate solutions on coarse meshes, thus improving overall computational efficiency.

As a result, in recent years, extensive work has been performed in the research community to characterize and develop high-order methods and improve their associated robustness. Flux reconstruction (FR) schemes are a class of high-order methods which have been introduced by \cite{huynh2007flux} and further extended through the work of \cite{WANG20098161,vincent2011new}. The FR approach is particularly appealing because it allows for the unification of various well-known schemes under one single elegant framework. In particular, collocation-based nodal discontinuous Galerkin (DG) methods and some spectral difference methods can be understood as specific cases of FR \cite{huynh2007flux}. The FR framework has moreover been used by \cite{vincent2011new} to identify an infinite family of linearly stable methods known as energy stable flux reconstruction (ESFR) schemes. The linear properties of ESFR schemes have been investigated by \cite{vincent2011insights, lambert2023l2}, and the ESFR framework was extended to triangular and tetrahedral elements by \cite{williams2013energy, castonguay2012new} and to the advection-diffusion problem by \cite{quaegebeur2020insights, quaegebeur2019stability1, quaegebeur2019stability2}. Through the work of \cite{allaneau2011connections,zwanenburg2016equivalence}, it was moreover shown that ESFR schemes could be expressed as filtered DG (FDG) schemes. More recently, this property has been used by \cite{cicchino2022nonlinearly} to construct non-linearly stable flux reconstruction (NSFR) schemes, an entropy stable extension of the classical ESFR framework.

ESFR schemes are particularly appealing as they are capable of achieving a good trade-off between accuracy and computational cost. More precisely, it is well-known that certain ESFR schemes are associated with larger Courant–Friedrichs–Lewy (CFL) explicit time-step limit than standard nodal DG schemes while being still able to retain a similar order of accuracy \cite{castonguay2012new}. This property is vital when considering realistic viscous CFD test cases, where explicit time-stepping restrictions can become a significant bottleneck.

In recent years, the desirable numerical properties of the schemes arising from the ESFR framework have provided both theoretical and practical motivations for the development of generalizations of the latter. Such generalizations have been proposed by \cite{vincent2015extended} and \cite{trojak2019GSFR}, who introduced the extended ESFR (EESFR) and the generalized Sobolev stable flux reconstruction (GSFR) approaches, respectively. Although EESFR and GSFR schemes share some numerical properties with ESFR schemes, neither is readily compatible with the NSFR framework introduced by \cite{cicchino2022nonlinearly}. Additionally, the non-trivial extension of these schemes to triangular and tetrahedral elements is yet to be proposed.

In this study, we introduce Sobolev stable DG (SSDG) schemes, a new FDG generalization of ESFR schemes. As a result of their FDG structure, SSDG schemes can easily be extended to both triangular and tetrahedral elements and to the NSFR framework. Additionally, we compare the linear properties of SSDG schemes to those of existing generalizations of ESFR schemes and investigate the relationship of the latter with the FDG framework.

This paper is organized as follows. In section \ref{sec:preliminaries}, we first review filtered nodal DG and FR methods in the context on one-dimensional scalar conservation laws and discuss the relationship of the latter with the ESFR framework. In section \ref{sec:gen}, we then introduce EESFR and SSDG schemes, two linearly stable and conservative generalizations of ESFR schemes. Section \ref{sec:VonNeumann} reviews the application of Von Neumann analysis to these two families of numerical schemes. This is finally used in \ref{sec:lin_prop}, where we compare the linear properties of EESFR and SSDG schemes.

\section{Preliminaries}
\label{sec:preliminaries}
Consider the general 1D scalar conservation law given by
\begin{equation}
    \frac{\partial u}{\partial t} + \frac{\partial f(u)}{\partial x} = 0
    \quad ; \quad
    u(x,0) = u_0(x)
    \quad ; \quad
    x \in \Omega,
    \label{eq:ch2_conservation_law}
\end{equation}
where $\Omega$ is an interval in $\mathbb{R}$, $f(u)$ is the flux of $u$ in the $x$ direction and $u(x,t)$ is the conserved scalar. In this section, we start by describing two common high-order spatial discretizations for Eq.(\ref{eq:ch2_conservation_law}) and revisit how the energy stable flux reconstruction (ESFR) schemes developed by \cite{vincent2011new} can be expressed within these frameworks.
\subsection{Filtered Nodal Discontinuous Galerkin}
The DG spatial discretization focuses on finding an approximate weak solution to Eq.(\ref{eq:ch2_conservation_law}). To achieve this, $\Omega$ is first discretized into $N$ non-overlapping intervals $\Omega_k := [x_k,x_{k+1}]$ such that
\begin{equation}
    \Omega = \bigcup_{k=0}^{N-1} \Omega_k
    \quad ; \quad
    \bigcap_{k=0}^{N-1} \Omega_k = \emptyset.
    \label{eq:ch2_discretization}
\end{equation}
To simplify implementation, each element $\Omega_k$ is usually transformed to a standard element $\bar{\Omega} := [-1,1]$ via the linear mappings $\Gamma_k : \Omega_k \to [-1, 1]$ defined as
\begin{equation}
    \xi = \Gamma_k(x) = 2\frac{x-x_k}{x_{k+1}-x_k} -1,
\end{equation}
which have the inverse
\begin{equation}
    x=\Gamma^{-1}_k(\xi) = \frac{1-\xi}{2}x_k + \frac{1+\xi}{2}x_{k+1}.
\end{equation}
This allows rewriting Eq.(\ref{eq:ch2_conservation_law}) as
\begin{equation}
    \frac{\partial \hat{u}_k}{\partial t} + \frac{\partial \hat{f}_k}{\partial \xi} = 0
    \label{eq:ch2_conservation_law_ref}
\end{equation}
with local normalized quantities given by
\begin{equation}
    \hat{u}_k(\xi,t) := u(\Gamma_k^{-1}(\xi), t)
    \quad ; \quad
    \hat{f}_k(u) = \frac{2}{|\Omega_k|}f(u),
\end{equation}
where $|\Omega_k|:= x_{k+1} - x_k$. In the DG framework, the local solutions $\hat{u}_k$ are then approximated using a combination of $p+1$ linearly independent polynomials $\chi_j \in \mathcal{P}^{p}(\bar{\Omega})$ such that
\begin{equation}
    \hat{u}_k(\xi,t) \approx \hat{u}_k^h(\xi,t) = \sum_{j=0}^{p} \hat{u}^h_{jk}(t) \chi_j(\xi) \label{eq:ch2_local_approx}
\end{equation}
and the approximate global solution on $\Omega$ is consequently given by
\begin{equation}
    u(x,t) \approx u^h(x,t) = \bigoplus_{k=0}^{N-1} \hat{u}_k^h(\Gamma_k(x),t).
\end{equation}
A DG discretization is obtained by requiring the residual of Eq.(\ref{eq:ch2_local_approx}) under Eq.(\ref{eq:ch2_conservation_law_ref}) to be orthogonal to $\mathcal{P}^{p}(\bar{\Omega})$. Integrating by parts twice and introducing the numerical flux $f^{*}_k$ yields the strong form of the DG method,
\begin{equation}
    \sum_{j=0}^{N-1}\int_{\bar{\Omega}} \chi_i \chi_j d \xi \frac{d \hat{u}^h_{jk}}{dt}
   +\int_{\bar{\Omega}} \frac{\partial\hat{f}_k}{\partial \xi} \chi_i d\xi = \left[(\hat{f}_k - \hat{f}_k^{*})\chi_i\right]_{\partial \bar{\Omega}}. \label{eq:ch2_DG_strong}
\end{equation}
If the local flux is approximated in the polynomial basis via a collocation projection of $\hat{f}_k(\hat{u}^h_k)$
\begin{equation}
    \hat{f}_k(\xi,t) \approx
    \hat{f}_k^h(\xi,t)=
    \sum_{j=0}^{p} \hat{f}_{jk}^h(t) \chi_j(\xi),
\end{equation}
Eq.(\ref{eq:ch2_DG_strong}) can be rewritten in matrix form as
\begin{equation}
    \mathbf{M} \frac{d\hat{\mathbf{u}}_k^h}{dt}
    = -\mathbf{M} \mathbf{D} \hat{\mathbf{f}}_k^h
    + \left[(\hat{f}_k^h - \hat{f}_k^{*})\bm{\chi}\right]_{\partial \bar{\Omega}},
    \label{eq:DG_strong_matrix}
\end{equation}
where
\begin{equation}
    (\mathbf{M})_{ij} := \int_{\bar{\Omega}} \chi_i \chi_j d \xi
    \quad ; \quad
    \bm{\chi}_i(\xi) = \chi_i(\xi)
    \quad ; \quad
    (\hat{\mathbf{u}}_k^h)_{i} := \hat{u}_{ik}^h
    \quad ; \quad
    (\hat{\mathbf{f}}_k^h)_{i} := \hat{f}_{ik}^h,
    \label{eq:ch2_DG_operators}
\end{equation}
$\mathbf{D}$ is the matrix representation of the differentiation operator on $\mathcal{P}^{p}(\bar{\Omega})$ expressed with respect to the basis $\{\chi_0, \dots, \chi_p\}$ and $\partial \bar{\Omega} := \{-1,1\}$.

The nodal DG spatial discretization introduced in Eq.(\ref{eq:DG_strong_matrix}) can be generalized by applying a linear filter to the right-hand side of Eq.(\ref{eq:DG_strong_matrix}). The resulting family of schemes, which will henceforth be referred to as filtered DG (FDG) schemes, can be written as
\begin{equation}
    (\mathbf{M}+\mathbf{K}) \frac{d\hat{\mathbf{u}}_k^h}{dt}
    = -\mathbf{M} \mathbf{D} \hat{\mathbf{f}}_k^h
    + \left[(\hat{f}_k^h - \hat{f}_k^{*})\bm{\chi}\right]_{\partial \bar{\Omega}},
    \label{eq:FDG_matrix}
\end{equation}
where $\mathbf{K} \in \mathbb{R}^{(p+1) \times (p+1)}$ defines the filtering operator employed. When the flux function is linear, \textit{ie.}, $f(u)=au$, Eq.(\ref{eq:FDG_matrix}) takes the form
\begin{equation}
    (\mathbf{M}+\mathbf{K}) \frac{d\hat{\mathbf{u}}_k^h}{dt}
    = -\hat{a}_k\mathbf{M} \mathbf{D} \hat{\mathbf{u}}_k^h
    + \left[(\hat{a}_k\hat{u}^h_k - \hat{f}_k^{*})\bm{\chi}\right]_{\partial \bar{\Omega}},
    \label{eq:FDG_matrix_linear}
\end{equation}
where $\hat{a}_k := 2a / |\Omega_k|$ is the normalized local advection velocity.

\subsubsection{Linear Stability and Conservativeness}
\label{sec:FDG_theo}
Simple sufficient conditions for linear stability and conservativeness of FDG schemes have been provided by \cite{allaneau2011connections}. They are summarized here for completeness.
\begin{theorem}
\label{theo:FDG_stability}
    The spatial discretization given by Eq.(\ref{eq:FDG_matrix_linear}) is stable provided that $\mathbf{M} + \mathbf{K} \succ 0$ and the numerical flux is given by
    \begin{equation}
        \hat{f}^*(u^{+},u^{-}) = \hat{a} \frac{u^{+}+u^{-}}{2} - (1-\alpha)|\hat{a}| \frac{u^{+}-u^{-}}{2},
        \label{eq:num_flux}
    \end{equation}
    where $u^+$ and $u^-$ represent the solution values at the right and left of a given elemental interface and $\alpha \in [0,1]$. 
\end{theorem}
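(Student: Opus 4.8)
\section*{Proof proposal}

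The plan is to use a discrete energy method with the norm induced by the positive-definite operator $\mathbf{M}+\mathbf{K}$. Since $\mathbf{M}+\mathbf{K} \succ 0$ (and symmetric), the quantity $\|\hat{\mathbf{u}}_k^h\|^2_{\mathbf{M}+\mathbf{K}} := (\hat{\mathbf{u}}_k^h)^T(\mathbf{M}+\mathbf{K})\hat{\mathbf{u}}_k^h$ defines a genuine elemental norm equivalent to the $L^2$ norm, and the global broken energy $E(t) := \sum_{k=0}^{N-1} \tfrac{|\Omega_k|}{2}\|\hat{\mathbf{u}}_k^h\|^2_{\mathbf{M}+\mathbf{K}}$ is the natural candidate to control. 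Stability then follows once I show $\frac{dE}{dt} \le 0$ under periodic (or otherwise energy-conserving) boundary conditions on $\Omega$; the role of $\mathbf{M}+\mathbf{K}\succ 0$ is precisely to guarantee that $E$ is a bona fide norm, so that the sign of $\frac{dE}{dt}$ controls the discrete solution.

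First I would differentiate the elemental energy using the scheme. Left-multiplying Eq.(\ref{eq:FDG_matrix_linear}) by $(\hat{\mathbf{u}}_k^h)^T$ and invoking the symmetry of $\mathbf{M}+\mathbf{K}$ gives $\tfrac{1}{2}\frac{d}{dt}\|\hat{\mathbf{u}}_k^h\|^2_{\mathbf{M}+\mathbf{K}} = -\hat{a}_k (\hat{\mathbf{u}}_k^h)^T\mathbf{M}\mathbf{D}\hat{\mathbf{u}}_k^h + (\hat{\mathbf{u}}_k^h)^T\left[(\hat{a}_k\hat{u}_k^h - \hat{f}_k^*)\bm{\chi}\right]_{\partial\bar{\Omega}}$. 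The crucial algebraic fact is the discrete integration-by-parts identity: because $\mathbf{M}$ is symmetric, $\mathbf{M}\mathbf{D}+\mathbf{D}^T\mathbf{M}$ equals the surface matrix with entries $[\chi_i\chi_j]_{\partial\bar{\Omega}}$, both sides representing $\int_{\bar{\Omega}}\frac{\partial}{\partial\xi}(\chi_i\chi_j)\,d\xi$ exactly for polynomials of degree $p$. Hence the volume term collapses to a pure boundary contribution, $(\hat{\mathbf{u}}_k^h)^T\mathbf{M}\mathbf{D}\hat{\mathbf{u}}_k^h = \tfrac{1}{2}\left[(\hat{u}_k^h)^2\right]_{\partial\bar{\Omega}}$, and, using $(\hat{\mathbf{u}}_k^h)^T\bm{\chi}(\pm 1)=\hat{u}_k^h(\pm 1)$, the entire elemental energy rate becomes expressible purely through the interface traces $\hat{u}_k^h(\pm 1)$ and the numerical fluxes there.

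Next I would weight each elemental rate by $\tfrac{|\Omega_k|}{2}$, sum over $k$, and reorganize the result as a sum over element interfaces. The weighting converts the normalized quantities back to physical ones via $\tfrac{|\Omega_k|}{2}\hat{a}_k = a$ and $\tfrac{|\Omega_k|}{2}\hat{f}_k^* = f^*$, which renders the flux single-valued and consistent across each shared interface. Collecting the two contributions at a given interface, one from the right face of the left element (trace $u^-$) and one from the left face of the right element (trace $u^+$), and substituting the prescribed numerical flux Eq.(\ref{eq:num_flux}), the central part $\hat{a}\,(u^++u^-)/2$ should telescope and cancel, leaving only the dissipative part proportional to $-(1-\alpha)|\hat{a}|(u^+-u^-)^2 \le 0$ for $\alpha\in[0,1]$. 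The endpoint terms on $\partial\Omega$ vanish under periodic boundary conditions, so summation yields $\frac{dE}{dt} = -\sum_{\text{interfaces}} (1-\alpha)|\hat{a}|(u^+-u^-)^2 \le 0$, establishing the claim.

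The main obstacle is the interface bookkeeping in this last step: one must track the sign conventions and outward-normal orientations so that the $\xi=\pm 1$ boundary evaluations of adjacent elements are matched to a consistent $(u^+,u^-)$ labelling, and confirm that the central flux contributions genuinely cancel across each interface rather than accumulate. Handling the normalization factors $\hat{a}_k = 2a/|\Omega_k|$ correctly through the Jacobian weighting, and verifying that the surviving quadratic form in the jump is a negative-semidefinite multiple of $(u^+-u^-)^2$ for every $\alpha\in[0,1]$, is the only place where real care is required; the rest of the argument is a direct application of the discrete integration-by-parts identity and the positive-definiteness hypothesis.
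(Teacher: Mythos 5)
Your proposal follows essentially the same route as the paper: take the energy in the $\mathbf{M}+\mathbf{K}$ norm (a genuine norm since $\mathbf{M}+\mathbf{K}\succ 0$), pre-multiply the scheme by $(\hat{\mathbf{u}}_k^h)^T$, sum over elements, and show the resulting right-hand side is non-positive for the flux of Eq.(\ref{eq:num_flux}), with boundedness of the solution following from norm equivalence. The only difference is that you carry out the interface bookkeeping explicitly (the discrete integration-by-parts identity $\mathbf{M}\mathbf{D}+\mathbf{D}^T\mathbf{M}=[\chi_i\chi_j]_{\partial\bar{\Omega}}$, the $|\Omega_k|/2$ Jacobian weighting, and the telescoping of the central flux leaving $-\tfrac{1}{2}(1-\alpha)|a|(u^+-u^-)^2$ per interface), whereas the paper delegates exactly that step to the standard DG stability argument cited from the literature; your version is correct and, if anything, slightly more careful about the weighting needed for the cancellation on non-uniform meshes.
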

\begin{proof}
    If $\mathbf{M} + \mathbf{K} \succ 0$, then
    \begin{equation}
        ||u^h||_{\mathbf{M}+\mathbf{K}}^2 :=
        \sum_{k=0}^N
        (\hat{\mathbf{u}}_k^h)^T(\mathbf{M}+\mathbf{K}) \hat{\mathbf{u}}_k^h
    \end{equation}
    is a norm of the solution $u^h$. Pre-multiplying Eq.(\ref{eq:FDG_matrix_linear}) by $(\hat{\mathbf{u}}_k^h)^T$ and summing over all elements yields
    \begin{align}
    &\frac{1}{2} \frac{d}{dt} ||u^h||_{\mathbf{M}+\mathbf{K}}^2
    = \sum_{k=0}^N-\hat{a}_k(\hat{\mathbf{u}}_k^h)^T\mathbf{M} \mathbf{D} \hat{\mathbf{u}}_k^h
    + (\hat{\mathbf{u}}_k^h)^T\left[(\hat{a}_k\hat{u}^h_k - \hat{f}_k^{*})\bm{\chi}\right]_{\partial \bar{\Omega}}.
    \label{eq:stablity_derivation}
    \end{align}
    It is well known from the derivation of the linear stability of the DG method that the right-hand side of Eq.(\ref{eq:stablity_derivation}) is non-positive for numerical fluxes given by Eq.(\ref{eq:num_flux}) (see \cite{hesthaven2007nodal, allaneau2011connections} for example). Hence,
    \begin{equation}
        \frac{d}{dt}||u^h||_{\mathbf{M}+\mathbf{K}}^2 \leq 0
    \end{equation}
    and boundedness of all other norms of the solution follows from the equivalence of norms on finite-dimensional vector spaces.
\end{proof}

\begin{theorem}
    \label{theo:FDG_conservative}
    Let $(\hat{\mathbf{e}}^n)_i : = \delta_{in}$, where $\delta_{in}$ stands for the Kronecker delta. The spatial discretization given by Eq.(\ref{eq:FDG_matrix_linear}) is conservative provided that $(\hat{\mathbf{e}}^0)^T\mathbf{K} = 0$ when $\mathbf{K}$ is expressed in the Legendre basis.
\end{theorem}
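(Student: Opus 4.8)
The plan is to establish conservation in the usual finite-volume sense: to show that when the local rate of change of mass is summed over all elements, the interior contributions telescope and only the physical flux through the two endpoints of $\Omega$ survives. The guiding observation is that, once everything is written in the Legendre basis $\chi_j = P_j$, the total mass on $\Omega_k$ is governed entirely by the zeroth modal coefficient, so the whole question reduces to the evolution of that single mode and to verifying that the filter $\mathbf{K}$ does not disturb it.

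First I would rewrite the local mass in modal form. Using the change of variables $dx = \tfrac{|\Omega_k|}{2}\,d\xi$ and the identity $\int_{\bar{\Omega}} P_j\,d\xi = 2\delta_{j0}$, one finds $\int_{\Omega_k}\hat{u}_k^h\,dx = |\Omega_k|\,(\hat{\mathbf{e}}^0)^T\hat{\mathbf{u}}_k^h$, so it suffices to track $\tfrac{d}{dt}(\hat{\mathbf{e}}^0)^T\hat{\mathbf{u}}_k^h$. I would then pre-multiply Eq.(\ref{eq:FDG_matrix_linear}) by $(\hat{\mathbf{e}}^0)^T$. Since $\mathbf{M}$ is diagonal in the Legendre basis with $(\mathbf{M})_{0j} = \int_{\bar{\Omega}}P_0 P_j\,d\xi = 2\delta_{j0}$, we have $(\hat{\mathbf{e}}^0)^T\mathbf{M} = 2(\hat{\mathbf{e}}^0)^T$; combined with the hypothesis $(\hat{\mathbf{e}}^0)^T\mathbf{K} = 0$, the left-hand side collapses to $2\,\tfrac{d}{dt}(\hat{\mathbf{e}}^0)^T\hat{\mathbf{u}}_k^h$, with no contribution from the filter. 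This is the heart of the argument: the stated condition on $\mathbf{K}$ is exactly what guarantees that the filter leaves the mean mode, and hence the mass, untouched.

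Turning to the right-hand side, I would exploit that $\chi_0 = P_0 \equiv 1$. The volume term gives $(\hat{\mathbf{e}}^0)^T\mathbf{M}\mathbf{D}\hat{\mathbf{u}}_k^h = \int_{\bar{\Omega}}\tfrac{\partial \hat{u}_k^h}{\partial \xi}\chi_0\,d\xi = \hat{u}_k^h(1) - \hat{u}_k^h(-1)$, while the surface term reduces to $[\hat{a}_k\hat{u}_k^h - \hat{f}_k^{*}]_{\partial\bar{\Omega}}$ because $\chi_0$ equals unity at both endpoints. I expect the interior flux $\hat{a}_k\hat{u}_k^h$ to cancel exactly between these two contributions, leaving $2\,\tfrac{d}{dt}(\hat{\mathbf{e}}^0)^T\hat{\mathbf{u}}_k^h = -[\hat{f}_k^{*}]_{\partial\bar{\Omega}}$. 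Restoring the physical flux through $f_k^{*} = \tfrac{|\Omega_k|}{2}\hat{f}_k^{*}$ then yields $\tfrac{d}{dt}\int_{\Omega_k}\hat{u}_k^h\,dx = -(F_{k+1} - F_k)$, where $F_k$ denotes the physical numerical flux at the interface $x_k$.

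Finally I would sum over $k$. Because the numerical flux is single-valued at each interior interface — the physical flux computed from either adjacent element agrees — the $F_k$ telescope, so that $\tfrac{d}{dt}\int_{\Omega}u^h\,dx = -(F_N - F_0)$ depends only on the fluxes at the two domain boundaries, which is the definition of a conservative scheme. The main obstacle is not analytic depth but the careful bookkeeping of the second and third steps: confirming that the Legendre structure makes $\mathbf{M}$ diagonal and $\chi_0$ constant, that the hypothesis on $\mathbf{K}$ isolates the zeroth mode, and that the interior flux cancels cleanly so that only the single-valued interface flux remains.
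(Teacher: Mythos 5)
Your proposal is correct and follows essentially the same route as the paper's proof: pre-multiply the scheme by $(\hat{\mathbf{e}}^0)^T$ in the Legendre basis, use the hypothesis $(\hat{\mathbf{e}}^0)^T\mathbf{K}=0$ so the filter does not affect the mean mode, and use $\psi_0 \equiv 1$ to reduce the volume term to boundary values so that only $-[\hat{f}_k^*]_{\partial\bar{\Omega}}$ survives. The only difference is that you carry the argument one step further by explicitly summing over elements and invoking the single-valuedness of the numerical flux to telescope the interface contributions, a step the paper leaves implicit.
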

\begin{proof}
    Without loss of generality, assume that Eq.(\ref{eq:FDG_matrix}) is written in the Legendre basis. Letting $\psi_n$ denote the $n$th order Legendre polynomial, we first note that
    \begin{equation}
    \frac{d}{dt} \int_{\bar{\Omega}} \hat{u}^h_k d \xi
    =
    \frac{d}{dt} \int_{\bar{\Omega}} \psi_{0} \hat{u}^h_k d \xi
    = \frac{d}{dt} \mathbf{e}_0^T \mathbf{M} \hat{\mathbf{u}}_k^h
    =\frac{d}{dt}\mathbf{e}_0^T(\mathbf{M} + \mathbf{K})\hat{\mathbf{u}}^h_k.
\end{equation}
Pre-multiplying Eq.(\ref{eq:FDG_matrix}) by $(\mathbf{e}^0)^T$ yields
\begin{align}
    \frac{d}{dt} \int_{\bar{\Omega}} \hat{u}^h_k d \xi
    &=
    \frac{d}{dt}(\mathbf{e}^0)^T(\mathbf{M} + \mathbf{K})\hat{\mathbf{u}}^h_k
    =
    -(\mathbf{e}^0)^T\mathbf{M}\mathbf{D} \hat{\mathbf{f}}_k^h
    + (\mathbf{e}^0)^T\left[(\hat{f}_k^h - \hat{f}_k^{*})\bm{\chi}\right]_{\partial \bar{\Omega}}
    \nonumber \\
    &=
    -\int_{\bar{\Omega}} \frac{\partial \hat{f}^h_k}{\partial \xi} \psi_0 d\xi
    + \left[(\hat{f}_k^h-\hat{f}^*_k)\psi_0\right]_{\partial\bar{\Omega}}
    =
    -\left[\hat{f}_k^h\right]_{\partial\bar{\Omega}}
    + \left[(\hat{f}_k^h-\hat{f}^*_k)\right]_{\partial\bar{\Omega}} \nonumber \\
    &= -\left[\hat{f}^*_k\right]_{\partial\bar{\Omega}},
\end{align}
which completes the proof.
\end{proof}

\subsection{Flux Reconstruction}
The flux reconstruction (FR) framework was initially introduced by \cite{huynh2007flux} as a means to develop a family of spatial discretizations for Eq.(\ref{eq:ch2_conservation_law}). FR schemes are usually derived by constructing piecewise high-order polynomials satisfying Eq.(\ref{eq:ch2_conservation_law}) in its differential form and introducing correction functions to ensure continuity of the flux function at the elemental interfaces.

In this paper, for notational convenience, we, however, somewhat depart from the usual approach and define FR schemes via
\begin{equation}
    \frac{d\hat{\mathbf{u}}_k^h}{dt}
    = -\mathbf{D} \hat{\mathbf{f}}_k^h
    + (\mathbf{M} + \mathbf{Q})^{-1}\left[(\hat{f}_k^h - \hat{f}_k^{*})\bm{\chi}\right]_{\partial \bar{\Omega}}.
    \label{eq:FR_unconventional}
\end{equation}
It can readily be seen that the schemes described by Eq.(\ref{eq:FR_unconventional}) are effectively obtained by applying a linear filter to the discontinuous flux terms in the DG formulation. Using the notation in Eq.(\ref{eq:FR_unconventional}), the derivatives of the correction functions $g_R$ and $g_L$ typically used in the FR framework are given by
\begin{align}
    g_R'(\xi) &= (\bm{\chi}(\xi))^T(\mathbf{M} + \mathbf{Q})^{-1} \mathbf{r}, \\
    g_L'(\xi) &= -(\bm{\chi}(\xi))^T(\mathbf{M} + \mathbf{Q})^{-1} \mathbf{l},
\end{align}
where $\mathbf{r} := \bm{\chi}(1)$ and $\mathbf{l} : = \bm{\chi}(-1)$. To ensure Eq.(\ref{eq:FR_unconventional}) is strictly equivalent to the usual FR formulation, one must require $\mathbf{Q} \in \mathbb{R}^{(p+1)\times(p+1)}$ to be such that the conventional boundary conditions on $g_R$ and $g_L$ are satisfied, \textit{ie.},
\begin{align}
    g_R(1) &= 1 \quad ; \quad g_R(-1) = 0 \implies \int_{-1}^{1} g_R'(\xi) d\xi = 1 \\
    g_L(1) &= 0 \quad ; \quad g_L(-1) = 1 \implies \int_{-1}^{1} g_L'(\xi) d\xi = -1.
\end{align}
This will be the case provided that $\mathbf{Q}$ is such that the spatial discretization shown in Eq.(\ref{eq:FR_unconventional}) is conservative.

\subsubsection{Conservativeness}
Following the simple argument presented by \cite{vincent2015extended}, a sufficient condition for conservativeness of the schemes given by Eq.(\ref{eq:FR_unconventional}) can be derived.
\begin{theorem}
\label{theo:FR_conservative}
The spatial discretization given by Eq.(\ref{eq:FR_unconventional}) is conservative provided that $(\hat{\mathbf{e}}^0)^T\mathbf{Q} = \mathbf{Q}\hat{\mathbf{e}}^0 =0$ when $\mathbf{Q}$ is expressed in the Legendre basis.
\end{theorem}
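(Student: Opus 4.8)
The plan is to mirror the proof of Theorem \ref{theo:FDG_conservative}, adapting it to the FR form in Eq.(\ref{eq:FR_unconventional}). As there, I would work in the Legendre basis, so that $\psi_0 \equiv 1$ and the cell average satisfies $\int_{\bar\Omega}\hat u^h_k\,d\xi = (\hat{\mathbf{e}}^0)^T\mathbf{M}\hat{\mathbf{u}}_k^h$, using $(\hat{\mathbf{e}}^0)^T\bm\chi = \chi_0 = \psi_0 = 1$. The target is the local conservation identity $\frac{d}{dt}\int_{\bar\Omega}\hat u^h_k\,d\xi = -[\hat f^*_k]_{\partial\bar\Omega}$; summing over all elements then makes the interface fluxes telescope, yielding global conservation exactly as the concluding step of the FDG argument.

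First I would differentiate the cell average and substitute Eq.(\ref{eq:FR_unconventional}), giving $\frac{d}{dt}\int_{\bar\Omega}\hat u^h_k\,d\xi = (\hat{\mathbf{e}}^0)^T\mathbf{M}\frac{d\hat{\mathbf{u}}_k^h}{dt} = -(\hat{\mathbf{e}}^0)^T\mathbf{M}\mathbf{D}\hat{\mathbf{f}}_k^h + (\hat{\mathbf{e}}^0)^T\mathbf{M}(\mathbf{M}+\mathbf{Q})^{-1}[(\hat f^h_k - \hat f^*_k)\bm\chi]_{\partial\bar\Omega}$. The volume term is independent of $\mathbf{Q}$: since $\psi_0$ is constant, $(\hat{\mathbf{e}}^0)^T\mathbf{M}\mathbf{D}\hat{\mathbf{f}}_k^h = \int_{\bar\Omega}\psi_0\,\partial_\xi\hat f^h_k\,d\xi = [\hat f^h_k]_{\partial\bar\Omega}$, identical to the corresponding term in Theorem \ref{theo:FDG_conservative}.

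The crux is the boundary term, where the filter $(\mathbf{M}+\mathbf{Q})^{-1}$ must be removed, and this is exactly where the conservativeness condition enters. I would use the row condition $(\hat{\mathbf{e}}^0)^T\mathbf{Q}=0$, which gives $(\hat{\mathbf{e}}^0)^T(\mathbf{M}+\mathbf{Q}) = (\hat{\mathbf{e}}^0)^T\mathbf{M}$; right-multiplying by $(\mathbf{M}+\mathbf{Q})^{-1}$ produces the key identity $(\hat{\mathbf{e}}^0)^T\mathbf{M}(\mathbf{M}+\mathbf{Q})^{-1} = (\hat{\mathbf{e}}^0)^T$. The boundary term then collapses to $(\hat{\mathbf{e}}^0)^T[(\hat f^h_k - \hat f^*_k)\bm\chi]_{\partial\bar\Omega} = [(\hat f^h_k - \hat f^*_k)\psi_0]_{\partial\bar\Omega} = [\hat f^h_k - \hat f^*_k]_{\partial\bar\Omega}$, and combining with the volume term gives $-[\hat f^h_k]_{\partial\bar\Omega} + [\hat f^h_k - \hat f^*_k]_{\partial\bar\Omega} = -[\hat f^*_k]_{\partial\bar\Omega}$, as required. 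Equivalently, one can left-multiply Eq.(\ref{eq:FR_unconventional}) by $(\mathbf{M}+\mathbf{Q})$ to clear the inverse before pre-multiplying by $(\hat{\mathbf{e}}^0)^T$, which exposes the same cancellation.

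I expect the only real obstacle to be the bookkeeping around $(\mathbf{M}+\mathbf{Q})^{-1}$, and the identity above is precisely what neutralizes it; the row condition does all the work for the telescoping statement. The column condition $\mathbf{Q}\hat{\mathbf{e}}^0 = 0$ does not seem to enter this argument, so I would flag it for verification: for the symmetric $\mathbf{Q}$ arising from the ESFR correction matrices the two conditions coincide, and I suspect it is stated to enforce that symmetric structure (and thereby the strict equivalence of Eq.(\ref{eq:FR_unconventional}) to the conventional FR formulation) rather than being required by the conservation proof itself. As a consistency check along the way, I would also confirm that the same identity reproduces the conventional normalizations $\int_{-1}^1 g_R'\,d\xi = 1$ and $\int_{-1}^1 g_L'\,d\xi = -1$ stated before the theorem, using $\int_{-1}^1\bm\chi\,d\xi = \mathbf{M}\hat{\mathbf{e}}^0$ together with $\mathbf{r}=\bm\chi(1)$, $\mathbf{l}=\bm\chi(-1)$.
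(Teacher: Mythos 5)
Your proof is correct, and it takes a genuinely more direct route than the paper's. The paper never computes the evolution of the cell average: it instead evaluates $\int_{-1}^{1} g_R'\,d\xi$ and $\int_{-1}^{1} g_L'\,d\xi$, shows they equal $1$ and $-1$ using the same algebraic fact you isolate (in the Legendre basis the row condition $(\hat{\mathbf{e}}^0)^T\mathbf{Q}=0$ together with the diagonality of $\mathbf{M}$ gives $(\hat{\mathbf{e}}^0)^T(\mathbf{M}+\mathbf{Q})^{-1}=(\mathbf{M}^{-1})_{00}(\hat{\mathbf{e}}^0)^T$), and then concludes indirectly: the correction functions satisfy the required normalization, so Eq.(\ref{eq:FR_unconventional}) coincides with the standard FR formulation, whose conservativeness is taken as known. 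Your argument --- pre-multiplying Eq.(\ref{eq:FR_unconventional}) by $(\hat{\mathbf{e}}^0)^T\mathbf{M}$ and collapsing the lifting term via $(\hat{\mathbf{e}}^0)^T\mathbf{M}(\mathbf{M}+\mathbf{Q})^{-1}=(\hat{\mathbf{e}}^0)^T$ --- is self-contained, exactly parallels the proof of Theorem \ref{theo:FDG_conservative}, and lands directly on the telescoping identity $\tfrac{d}{dt}\int_{\bar{\Omega}}\hat{u}^h_k\,d\xi=-[\hat{f}^*_k]_{\partial\bar{\Omega}}$ without appealing to correction functions at all; what the paper's packaging buys in exchange is the explicit verification that Eq.(\ref{eq:FR_unconventional}) reproduces a \emph{bona fide} FR scheme with admissible $g_R$, $g_L$, which is used elsewhere in the text. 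Your flag on the column condition is also consistent with the paper: its proof likewise uses only the row condition ($\mathbf{Q}\hat{\mathbf{e}}^0=0$ controls the column $(\mathbf{M}+\mathbf{Q})^{-1}\hat{\mathbf{e}}^0$, not the row, and for the symmetric $\mathbf{Q}$ ultimately considered the two conditions coincide), so nothing in your argument is missing.
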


\begin{proof}
 Without loss of generality, assume that Eq.(\ref{eq:FDG_matrix}) is written in the Legendre basis. Then,
 \begin{align}
     \int_{-1}^{1} g_R'(\xi) d\xi &= \left(\int_{-1}^{1} (\bm{\psi}(\xi))^Td\xi\right)(\mathbf{M} + \mathbf{Q})^{-1} \mathbf{r}, \\
    \int_{-1}^{1} g_L'(\xi) d\xi &= -\left(\int_{-1}^{1} (\bm{\psi}(\xi))^Td\xi\right)(\mathbf{M} + \mathbf{Q})^{-1} \mathbf{l},
 \end{align}
 where $\bm{\psi}_i(\xi) = \psi_i(\xi)$. Leveraging the properties of $\mathbf{M}$ and $\mathbf{Q}$ in the Legendre basis, one concludes
 \begin{align}
     \int_{-1}^{1} g_R'(\xi) d\xi &=
     2(\hat{\mathbf{e}}^0)^T (\mathbf{M} + \mathbf{Q})^{-1} \mathbf{r} = 2(\mathbf{M}^{-1})_{00} =1, \\
     \int_{-1}^{1} g_L'(\xi) d\xi &=
     -2(\hat{\mathbf{e}}^0)^T (\mathbf{M} + \mathbf{Q})^{-1} \mathbf{l} = -2(\mathbf{M}^{-1})_{00} = -1,
 \end{align}
 which is compatible with the boundary conditions prescribed on $g_R$ and $g_L$. Hence, provided that the sufficient condition is satisfied, Eq.(\ref{eq:FR_unconventional}) is equivalent to the standard FR formulation and yields a conservative family of spatial discretizations.
\end{proof}

By linear independence of $\bm{\psi}(1)$ and $\bm{\psi}(-1)$, it follows that any pair of correction functions satisfying the required boundary conditions can be generated through the selection of a suitable matrix $\mathbf{Q}$ of the form prescribed above. Moreover, if $g_R$ and $g_L$ are symmetric, such a symmetric $\mathbf{Q}$ can always be found.

\subsection{Energy Stable Flux Reconstruction Schemes}
ESFR schemes are a family of linearly stable FR schemes developed by \cite{vincent2011new}. As shown by \cite{allaneau2011connections} and later generalized by \cite{zwanenburg2016equivalence}, ESFR schemes can be constructed via both the FDG and FR approaches and can hence be defined through
\begin{equation}
    (\mathbf{K})_{ESFR} = \mathbf{Q}_{ESFR} = \frac{1}{2} c_p(\mathbf{D}^p)^T \mathbf{M} \mathbf{D}^p,
    \label{eq:ESFR_definition}
\end{equation}
where $\mathbf{D}^p$ is used to denote the $p$th power of the differentiation matrix. The consistency of this statement relies on the fact $(\mathbf{K})_{ESFR}\mathbf{D}=0$, which ensures equivalence of the FDG and FR definitions. ESFR schemes are provably linearly stable ($\mathbf{M} + (\mathbf{K})_{ESFR} \succ 0$) whenever $c_p > c^{-}_{ESFR}$ for
\begin{equation}
    c_{ESFR}^{-} = \frac{-2}{(2p+1)(k_p)^2}, \quad
    \text{where} \quad
    k_p := \frac{(2p)!}{2^p p!}.
\end{equation}
Other important values of $c_p$ have also been identified by \cite{vincent2011new} and are listed here for completeness
\begin{equation}
    c_{SD} := \frac{2p}{(2p+1)(p+1)(k_p)^2}
    \quad ; \quad
    c_{HU} := \frac{2(p+1)}{(2p+1)p(k_p)^2},
    \label{eq:ch2_c_values}
\end{equation}
where using $c_{SD}$ and $c_{HU}$ results in a spectral difference scheme and Huynh's $g_2$ scheme respectively. Additionally, for notational convenience, in what follows, we will denote the ESFR scheme maximizing the CFL time-step limit as $c_{ESFR}^{\tau}$.

\section{Generalizations of ESFR Schemes}
\label{sec:gen}
We now present the generalizations of ESFR schemes studied in this paper.

\subsection{Extended Energy Stable Flux Reconstruction Schemes}
Extended ESFR (EESFR) schemes have been introduced by \cite{vincent2015extended} as an extended family of linearly stable FR schemes which generalizes the ESFR framework. EESFR schemes are defined by requiring $\mathbf{Q}$ to satisfy Theorem \ref{theo:FR_conservative} along with
\begin{align}
    &\mathbf{Q} = \mathbf{Q}^T,
    \label{eq:ch6_Q_req1}\\
    &\mathbf{Q}{}\mathbf{D} + \mathbf{D}^T\mathbf{Q}^T = 0,
    \label{eq:ch6_Q_req2}\\
    &\mathbf{M} + \mathbf{Q} \succ 0,
    \label{eq:ch6_Q_req3} \\
    & \mathbf{J} \mathbf{Q} = \mathbf{Q} \mathbf{J},
    \label{eq:Q_req4}
\end{align}
where $\mathbf{J}_{ij} := \delta_{ij}(-1)^{i+1}$. As shown by \cite{vincent2015extended}, Eq.(\ref{eq:ch6_Q_req1}), Eq.(\ref{eq:ch6_Q_req2}) and Eq.(\ref{eq:ch6_Q_req3}) ensure stability of the FR spatial discretization while Eq.(\ref{eq:Q_req4}) guarantees symmetry of the correction functions. The general form of $\mathbf{Q}_{EESFR}$ for $p \in \{3,4,5,6\}$ has been derived by \cite{vincent2015extended}. While the number of scalar parameters required to describe $\mathbf{Q}_{EESFR}$ increases with $p$, in this paper, for simplicity, as done by \cite{vermeire2016properties} we will restrict ourselves to two-parameter EESFR schemes for which $\mathbf{Q}_{EESFR}$, in the Legendre basis, is given by
\begin{equation}
    \mathbf{Q}_{EESFR}
    =
    \begin{bmatrix}
        0 & \cdots & 0 & 0 &0 \\
        \vdots & \ddots & \vdots & \vdots & \vdots \\
        0 & \cdots & 0 & 0 & -\beta q_1 \\
        0 & \cdots & 0 & q_1 & 0 \\
        0 & \cdots & -\beta q_1 & 0 & q_0
    \end{bmatrix},
    \quad
    \text{with }
    \beta = \frac{2p-1}{2p - 3}.
    \label{eq:ch6_Q}
\end{equation}
In this case, it can easily be checked that positive-definiteness of $\mathbf{M} + \mathbf{Q}_{EESFR}$ is guaranteed provided that
\begin{align}
    q_{1} >& -\frac{2}{2p-1}, \label{eq:ch6_stability_req1} \\
    q_1^2 <& \frac{2(2p-3)}{(2p-1)^2}\left(\frac{2}{2p+1}+q_0\right) \label{eq:ch6_stability_req2},
\end{align}
which is consistent with the stability conditions presented by \cite{vincent2015extended} and \cite{vermeire2016properties} although written in a slightly different form. In what follows, we will denote the stability upper and lower bounds for $q_1$ by $q_1^+$ and $q_1^{-}$. Both $q_1^+$ and $q_1^-$ are functions of $q_0$. When $q_1=0$, the one-parameter family of ESFR schemes is recovered. In this case, the ESFR and EESFR representations are connected via
\begin{equation}
    q_0 = k_p^2 c_p.
    \label{eq:param_scale}
\end{equation}
Finally, as observed by \cite{vermeire2016properties}, it can readily be seen from Eq.(\ref{eq:ch6_Q}) that in the limit $q_0 \to \infty$, EESFR schemes of degree $p$ collapse to ESFR schemes of degree $p-1$.

\subsection{Sobolev Stable Discontinuous Galerkin Schemes}
ESFR schemes are derived by requiring the boundedness of a broken Sobolev norm of the numerical solution. In this paper, we extend this idea by introducing a family of FDG discretizations which ensure the boundedness of more general Sobolev norms. These schemes, which will hereafter be referred to as Sobolev stable DG (SSDG), are defined via
\begin{equation}
    (\mathbf{K})_{SSDG} := \frac{1}{2} \sum_{k=1}^p c_k(\mathbf{D}^k)^T \mathbf{M} \mathbf{D}^k.
    \label{eq:SSDG}
\end{equation}
It is straightforward to verify that $(\mathbf{K})_{SSDG}$ always satisfies the sufficient condition for conservativeness and ensures linear stability of SSDG schemes whenever $\forall_{k=1}^{p} c_k > 0$. This follows directly from Theorems \ref{theo:FDG_stability} and \ref{theo:FDG_conservative}  presented in section \ref{sec:FDG_theo}. Clearly, when $\forall_{k=1}^{p-1}c_k=0$, one recovers $(\mathbf{K})_{ESFR}$. In this paper, for simplicity, we will mainly restrict ourselves to two-parameter SSDG schemes, that is, SSDG schemes for which $\forall_{k=1}^{p-2}c_k = 0$. In this case, since $(\mathbf{K})_{SSDG}$ is always diagonal in the Legendre basis, it is found that
\begin{equation}
    (\mathbf{K})_{SSDG} =
    \text{diag}\left(0,...,0, k_{p-1}^2c_{p-1}, \frac{1}{3}k_{p}^2c_{p-1} + k_{p}^2c_p\right).
    \label{eq:ch5_K}
\end{equation}
from which, by simple application of Theorem \ref{theo:FDG_stability}, the following sharper stability requirement can be deduced
\begin{align}
    c_{p-1} &> \frac{-2}{k_{p-1}^2(2p-1)}, \label{eq:ch5_stability_constraint1} \\
    c_p &> -\frac{1}{3}c_{p-1} - \frac{2}{k_p^2(2p+1)}. \label{eq:ch5_stability_constraint2}
\end{align}
In what follows, the stability lower bound for $c_{p-1}$ will be denoted by $c_{p-1}^-$. As can be seen from Eq.(\ref{eq:ch5_stability_constraint1}) and Eq.(\ref{eq:ch5_stability_constraint2}), $c_{p-1}^-$ is a function of $c_p$.

From Eq.(\ref{eq:ch5_K}), it can be seen that in the limit $c_{p-1} \to \infty$, the $p$th and $(p-1)$th order Legendre modes are annihilated from the scheme and SSDG schemes of degree $p$ collapse to a DG scheme of degree $p-2$. This behavior is analogous to that of ESFR schemes, which become a DG scheme of degree $p-1$ in the limit $c_p \to \infty$ \cite{vincent2011new}. Finally, Eq.(\ref{eq:ch5_K}) also shows that as $c_{p} \to \infty$, SSDG schemes of degree $p$ collapse to ESFR schemes of degree $p-1$.

\subsection{Generalized Sobolev Stable Flux Reconstruction}
Generalized Sobolev Stable FR (GSFR) schemes are another generalization of the ESFR framework proposed by \cite{trojak2019GSFR}, which can be defined through
\begin{equation}
    \mathbf{Q}_{GSFR} = \sum_{k=1}^p b_k(\mathbf{D}^k)^T \mathbf{M} \mathbf{D}^k.
\end{equation}
One should note that although the notation used in this definition somewhat differs from that used in the work of \cite{trojak2019GSFR}, it is mathematically equivalent to the latter. While GSFR schemes may be regarded as locally stable in the sense that they satisfy the condition provided by \cite{trojak2019GSFR} Eq.(37), some schemes in this family admit unbounded solutions, which implies they are not linearly stable in the usual sense. For instance, the $p=3$ GSFR scheme with $b_1 = 0.03$, $b_2 = 0.03$, $b_3=0.0075$ and an upwind numerical flux is not linearly stable, although it satisfies the local stability condition of \cite{trojak2019GSFR} Eq.(37). Details are provided in \ref{app2}. Since GSFR schemes cannot be regarded as linearly stable in the usual sense, they will not be further studied as part of this work, and we will restrict ourselves to the characterization of EESFR and SSDG schemes.

\subsection{Connections between Filtered DG and FR Schemes}
As demonstrated in \ref{app1}, it is not difficult to show that for general numerical fluxes given by Eq.(\ref{eq:num_flux}), ESFR schemes are the only FR schemes with symmetric correction functions which can be expressed in the FDG framework. Analogously, as shown in \ref{app1}, ESFR schemes are also the only FDG schemes with a symmetric $\mathbf{K}$ which can be expressed in the FR framework. Hence, the relationship between FDG and FR schemes can be summarized graphically as shown in Figure \ref{fig:Venn_FR_FDG}. This also implies that the intersection between the EESFR and SSDG families of numerical schemes exactly coincides with the set of all ESFR schemes, as shown in Figure \ref{fig:Venn2}. It should be stressed that this holds for general numerical fluxes given by Eq.(\ref{eq:num_flux}); if we restrict ourselves to upwind numerical fluxes in one dimension, then it can be shown that all FR schemes can be expressed as FDG schemes. Details regarding this degenerate case are provided in \ref{app1}.
\begin{figure}
    \centering
    \begin{subfigure}{.49\textwidth}
        \centering
        \includegraphics[width=0.99\linewidth, page = 1]{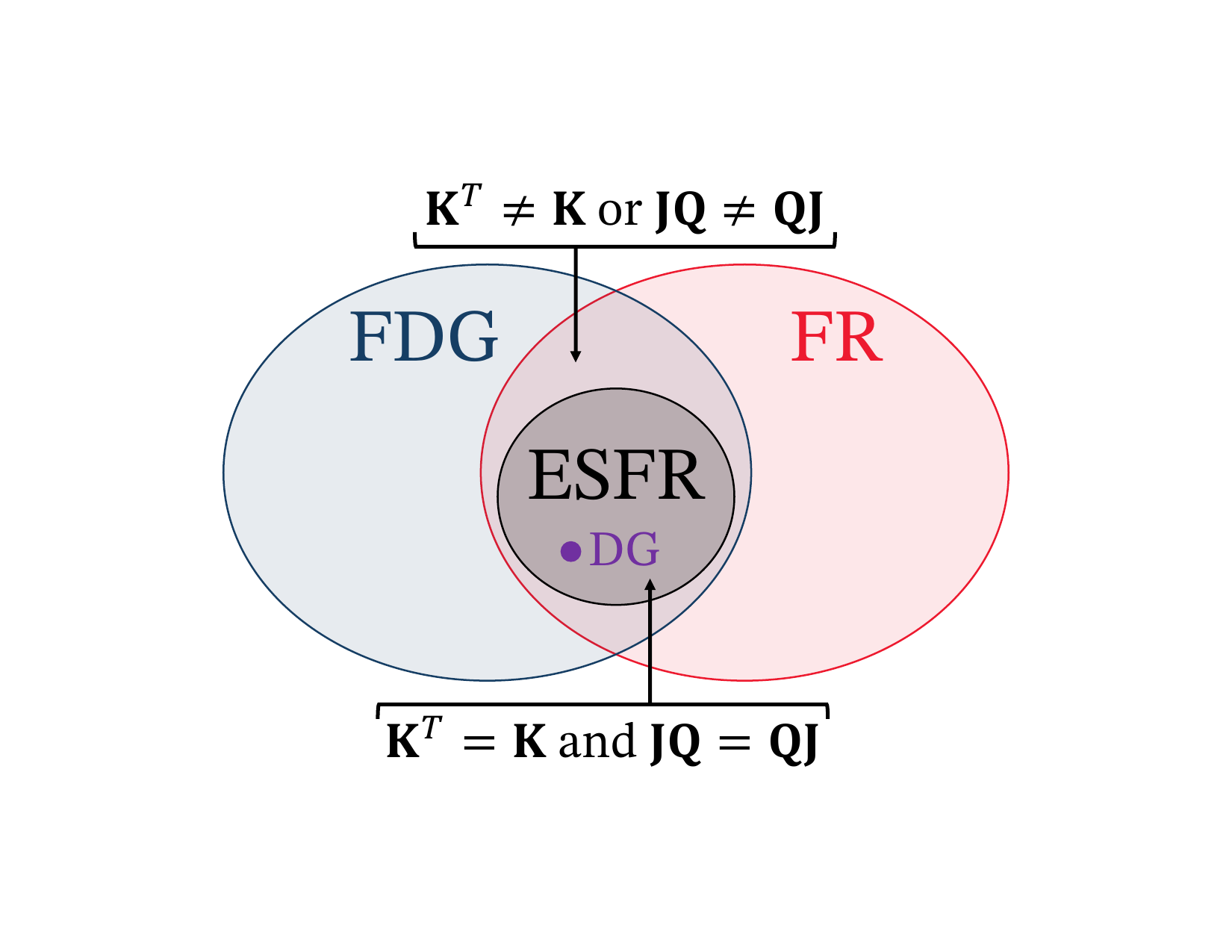}
        \caption{FR and FDG schemes.}
        \label{fig:Venn_FR_FDG}
    \end{subfigure}
    \begin{subfigure}{.49\textwidth}
        \centering
        \includegraphics[width=0.99\linewidth, page = 2]{Images/figures/venn.pdf}
        \caption{EESFR and SSDG schemes.}
        \label{fig:Venn2}
    \end{subfigure}
    \caption{Relationship between FR, FDG, EESFR, SSDG and ESFR schemes.}
\end{figure}

\section{Von Neumann Analysis}
\label{sec:VonNeumann}
To investigate the properties of the generalizations of the ESFR framework presented in the previous section, we follow the approach described by \cite{vincent2011insights, huynh2007flux} and use Von Neumann analysis.

Letting $f(u) := u$ in Eq.(\ref{eq:ch2_conservation_law}) results in the classic linear advection problem
\begin{equation}
    \frac{\partial u}{\partial t} + \frac{\partial u}{\partial x} = 0,
\end{equation}
which admits Bloch wave solutions of the form
\begin{equation}
    u(x,t) = e^{I(\theta x - \omega t)},
\end{equation}
provided that the temporal frequency $\omega(\theta) \in \mathbb{C}$ is related to the wavenumber $\theta \in \mathbb{R}$ via the dispersion relation
\begin{equation}
    \text{Re}(\omega) = \theta
\end{equation}
and the dissipation relation
\begin{equation}
    \text{Im}(\omega) = 0,
\end{equation}
where $I := \sqrt{-1}$. To compute the dispersion-dissipation relations of the schemes previously discussed, for simplicity, following the approach of \cite{vincent2011new}, we take $\forall_{k=0}^{N} |\Omega_k| = 1$, which results in $\hat{f}_k^h = 2\hat{u}^h_k$ in Eq.(\ref{eq:FDG_matrix}) and Eq.(\ref{eq:FR_unconventional}). We then seek numerical Bloch wave solutions of the form
\begin{equation}
    \hat{\mathbf{u}}^h_k = e^{I(k\theta^h-\omega^h)} \hat{\mathbf{v}}.
    \label{eq:Bloch_wave}
\end{equation}
Substituting Eq.(\ref{eq:Bloch_wave}) in Eq.(\ref{eq:FDG_matrix}) and Eq.(\ref{eq:FR_unconventional}) yields
\begin{equation}
    I\omega^h \hat{\mathbf{v}} = 2\mathbf{H} \hat{\mathbf{v}},
    \label{eq:eig}
\end{equation}
where
\begin{align}
    \mathbf{H}_{FDG} &=
    (\mathbf{M}+\mathbf{K})^{-1}\mathbf{M}\mathbf{D}
    - \frac{1}{2}\alpha (\mathbf{M}+\mathbf{K})^{-1}\mathbf{r}\mathbf{r}^T
    + \frac{1}{2}(2-\alpha) (\mathbf{M}+\mathbf{K})^{-1} \mathbf{l}\mathbf{l}^T \nonumber \\
    &+ \frac{1}{2}\alpha (\mathbf{M}+\mathbf{K})^{-1} \mathbf{r} \mathbf{l}^T e^{I\theta^h}
    -\frac{1}{2}(2-\alpha) (\mathbf{M}+\mathbf{K})^{-1} \mathbf{l} \mathbf{r}^T  e^{-I\theta^h},
    \label{eq:ch3_Q_FDG}
\end{align}
for FDG schemes and
\begin{align}
    \mathbf{H}_{FR}
    &=
    \mathbf{D}
    -\frac{1}{2}\alpha (\mathbf{M} + \mathbf{Q})^{-1} \mathbf{r} \mathbf{r}^T
    + \frac{1}{2}(2-\alpha) (\mathbf{M} + \mathbf{Q})^{-1} \mathbf{l} \mathbf{l}^T \nonumber \\
    &+\frac{1}{2}\alpha (\mathbf{M} + \mathbf{Q})^{-1} \mathbf{r} \mathbf{l}^T e^{I\theta^h}
    - \frac{1}{2}(2-\alpha) (\mathbf{M} + \mathbf{Q})^{-1} \mathbf{l} \mathbf{r}^{T} e^{-I\theta^h}
    \label{eq:ch3_Q_FR}
\end{align}
for FR schemes. Eq.(\ref{eq:eig}) is a classical eigenvalue problem and admits $p+1$ eigenpairs $(\omega^h, \hat{\mathbf{v}})$ for every input wavenumber $\theta^h$. To identify the physical eigenpair among the $p+1$ admissible ones, the approaches described by \cite{vincent2011insights,moura2015linear} can be used. As shown by \cite{huynh2007flux}, the spectrum of $\mathbf{H}$ is independent of the polynomial basis selected for the implementation of the numerical scheme. Hence, results derived from Von Neumann analysis can effectively be regarded as basis-free.

\subsection{Spectral Error}
The combined spectral error $E_T$ of a numerical scheme can be calculated from its dispersion-dissipation relations via
\begin{equation}
    E_T(\theta^h) := |\omega^h(\theta^h) - \omega(\theta^h)|,
\end{equation}
where $\omega$ and $\omega^h$ stand for the exact and numerical temporal frequencies, respectively. As done by \cite{vincent2011insights, huynh2007flux}, this can be used to define a spectral order of accuracy $A_T$ through
\begin{equation}
    A_T := \frac{\ln(E_T(\theta_R^h)) - \ln(E_T(\theta^h_R / 2))}{\ln(2)} - 1,
\end{equation}
where $\theta^h_R$ is a sufficiently small wavenumber which ensures $E_T << 1$ and is in the well-resolved range.

\subsection{Stability}
To determine the CFL time-step limit of a numerical scheme via Von Neumann analysis, we first recall that, by construction, its associated numerical Bloch wave solutions satisfy the ordinary differential equation
\begin{equation}
    \frac{d\hat{\mathbf{u}}_k^h}{dt} = -2\mathbf{H} \hat{\mathbf{u}}_k^h.
    \label{eq:ch3_Bloch_ODE}
\end{equation}
Applying an explicit Runge-Kutta (RK) temporal discretization with a time-step $\tau$ to Eq.(\ref{eq:ch3_Bloch_ODE}) leads to a fully discrete scheme of the form
\begin{equation}
    \hat{\mathbf{u}}_k^{h(n+1)} = \mathbf{R} \hat{\mathbf{u}}_k^{h(n)},
    \label{eq:ch3_RK}
\end{equation}
where $\hat{\mathbf{u}}_k^{h(n+1)}$ and $\hat{\mathbf{u}}_k^{h(n)}$ denote the numerical solution at time $t+\tau$ and $t$ respectively and $\mathbf{R} \in \mathbb{C}^{(p+1)\times(p+1)}$ depends on $\tau$ and the type of time-stepping scheme selected. Namely, for a three-stage third-order RK scheme (RK33) $\mathbf{R}$ can be written as \citep{carpenter1994fourth}
\begin{equation}
    \mathbf{R} = \mathbf{I} - 2\tau \mathbf{H} + \frac{1}{2!} (2\tau\mathbf{H})^2 - \frac{1}{3!}(2\tau \mathbf{H})^3,
    \label{eq:ch3_RK33}
\end{equation}
for a four-stage fourth-order RK scheme (RK44), $\mathbf{R}$ has the form \citep{carpenter1994fourth}
\begin{equation}
    \mathbf{R} = \mathbf{I} - 2\tau \mathbf{H} + \frac{1}{2!} (2\tau\mathbf{H})^2 - \frac{1}{3!}(2\tau \mathbf{H})^3 + \frac{1}{4!}(2\tau \mathbf{H})^4,
    \label{eq:ch3_RK44}
\end{equation}
and for the ``optimal'' five-stage fourth order low-storage RK scheme (RK45) identified by \cite{carpenter1994fourth}, $\mathbf{R}$ has the form
\begin{equation}
    \mathbf{R} = \mathbf{I} - 2\tau \mathbf{H} + \frac{1}{2!} (2\tau\mathbf{H})^2 - \frac{1}{3!}(2\tau \mathbf{H})^3 + \frac{1}{4!}(2\tau \mathbf{H})^4 - \frac{1}{200}(2\tau \mathbf{H})^5.
    \label{eq:ch3_RK45}
\end{equation}
The stability of the fully discrete scheme described by Eq.(\ref{eq:ch3_RK}) is guaranteed if $\rho(\mathbf{R})$, the spectral radius of $\mathbf{R}$, is less than unity for all possible values of $\mathbf{H}$. Thus, we define $\tau_{CFL}$ as the maximum value of $\tau$ for which $\rho(\mathbf{H}) < 1$ for all wavenumbers $-\pi \leq \theta^h \leq \pi$. Since it was assumed that $|\Omega_k|=1$ and $a=1$ when constructing $\mathbf{H}$, $\tau_{CFL}$ can effectively be regarded as the maximum non-dimensional time-step for the scheme.

\section{Results and Discussion}
\label{sec:lin_prop}
This section focuses on characterizing the linear properties of EESFR and SSDG schemes. Most results presented for EESFR schemes have already been discussed in depth by \cite{vermeire2016properties} and are hence presented for comparison purposes. For concision, the sections that follow mostly focus on the case $p=3$ with an upwind numerical flux. Schemes of higher order are expected to behave in a qualitatively similar way.

\subsection{Dispersion-Dissipation Relations}
The dispersion-dissipation relations for EESFR and SSDG schemes for $c_p \in \{c_{DG}, c_{SD}, c_{HU}\}$ and $q_0 \in \{k_p^2c_{DG}, k_p^2c_{SD}, k_p^2c_{HU}\}$ are shown in Figures \ref{fig:ch5_DG_dispers} through \ref{fig:ch5_HU_dissip}. For EESFR schemes, in each case, $q_1$ is varied from $0.5 q_1^-$ to $0.5 q_1^+$ while for SSDG schemes, $c_{p-1}$ is varied between $0.5c^{-}_{p-1}$ and $3\times10^{-2}$. In all cases, the exact dispersion-dissipation relation for the linear advection equation is indicated with a gray dashed line, while the dispersion-dissipation profiles for the case $c_{p-1} =0$ or $q_1 =0$ are shown with a loosely dotted black line. Moreover, for SSDG schemes, the dispersion-dissipation profiles obtained in the limit $c_{p-1} \to \infty$ are displayed using a tightly dotted black line.

From Figures \ref{fig:ch5_DG_dispers}, \ref{fig:ch5_SD_dispers}, and \ref{fig:ch5_HU_dispers}, it is clear that the dispersive behavior of EESFR schemes differs significantly from that of SSDG schemes. For EESFR schemes, positive values of $q_1$ lead to an increase in dispersion at high wavenumbers and a comparatively small decrease in dispersion at low wavenumbers, leaving the range over which the dispersion profile is accurate mostly unchanged compared to the case $q_1 = 0$. Conversely, negative values of $q_1$ reduce dispersion at high wavenumbers and increase dispersion at lower wavenumbers, thereby decreasing the range over which the dispersion profile of the scheme is accurate when compared to the case $q_1 = 0$. For strongly negative values of $q_1$, the dispersion is close to 0 at high wavenumbers, thus resulting in schemes which are not capable of advecting initial conditions with a high wavenumber content. For SSDG schemes, the dispersive behavior observed appears to be much closer qualitatively to that of ESFR schemes as initially described in the work of \cite{vincent2011insights}. Namely, increasing $c_{p-1}$ decreases dispersion at high wavenumbers. As expected, when $c_{p-1} \to \infty$ the dispersion vanishes over wavenumbers in the range $(p-1)\pi \leq \theta \leq (p+1)\pi$, thereby preventing the correct advection of initial conditions with a wavenumber content in this range.

As can be observed in \ref{fig:ch5_DG_dissip}, \ref{fig:ch5_SD_dissip}, and \ref{fig:ch5_HU_dissip}, EESFR and SSDG schemes also exhibit a fundamentally different dissipative behavior. For EESFR schemes, positive values of $q_1$ increase dissipation at high wavenumbers while increasing the range over which the dissipation relation of the scheme is accurate. Negative values of $q_1$ have the opposite effect. The qualitative similarity between SSDG and ESFR schemes is once again apparent when considering the dissipation profiles of the former. Namely, increasing $c_{p-1}$ decreases dissipation at high wavenumbers and decreases the range over which the dissipation relation is accurate. In the limit $c_{p-1} \to \infty$, the dissipation vanishes over wavenumbers in the range $(p-1)\pi \leq \theta \leq (p+1)\pi$, as expected. Both EESFR and SSDG schemes are purely dissipative, \textit{ie.}, $\text{Im}(\omega^h) \leq 0$, which is consistent with the linear stability of both types of schemes.

All results presented for EESFR schemes are consistent with those reported by \cite{vermeire2016properties}. We refer the reader to this work for a more thorough discussion on the latter.
\begin{figure}
    \centering
    \begin{subfigure}{.49\textwidth}
        \centering
        \includegraphics[width=1\linewidth]{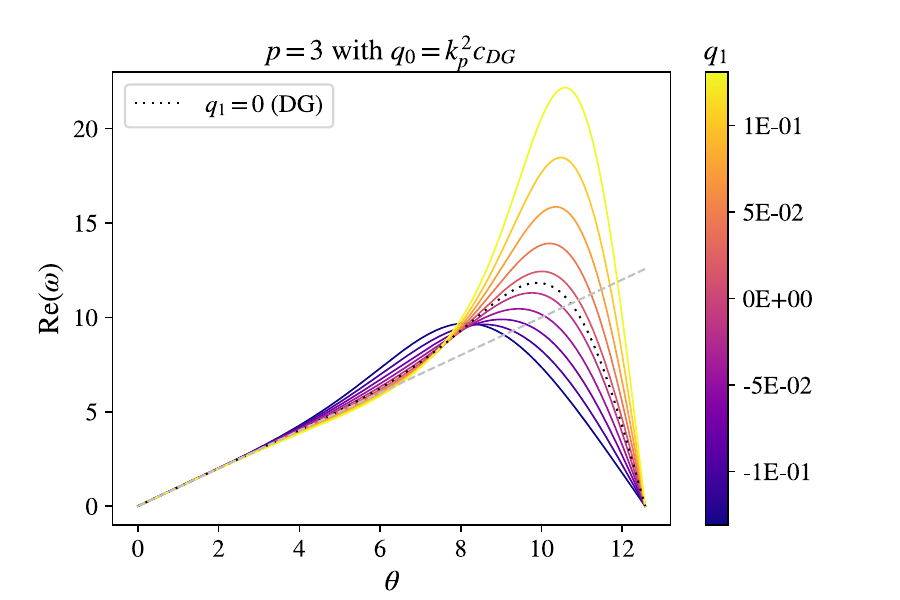}
        \caption{EESFR.}
    \end{subfigure}
    \begin{subfigure}{.49\textwidth}
        \centering
        \includegraphics[width=1\linewidth]{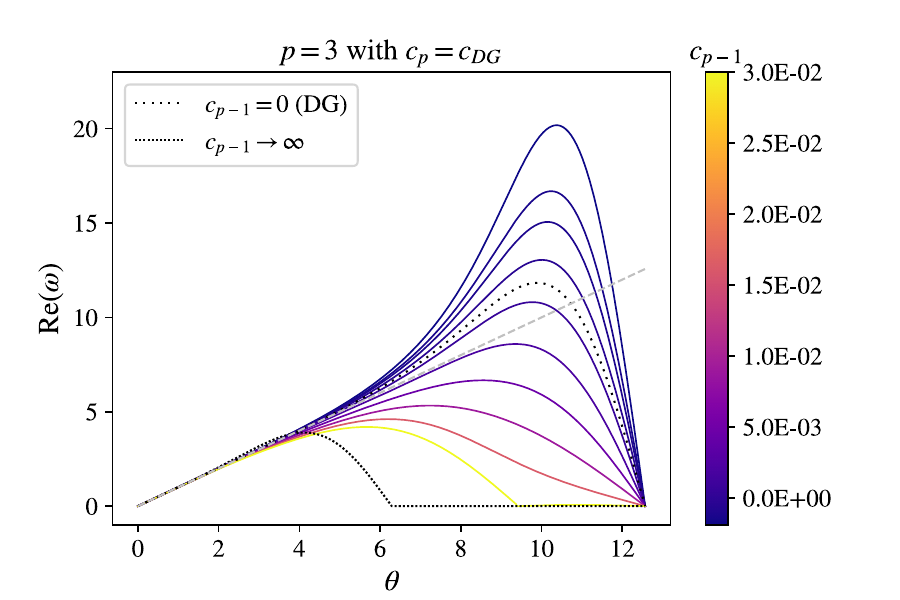}
        \caption{SSDG.}
    \end{subfigure}
    \caption{Dispersion relations for two-parameter EESFR and SSDG schemes with $p=3$, $c_{p} = c_{DG}$ and $q_0 = k_p^2 c_{DG}$ for an upwind numerical flux.}
    \label{fig:ch5_DG_dispers}
\end{figure}
\begin{figure}
    \centering
    \begin{subfigure}{.49\textwidth}
        \centering
        \includegraphics[width=1\linewidth]{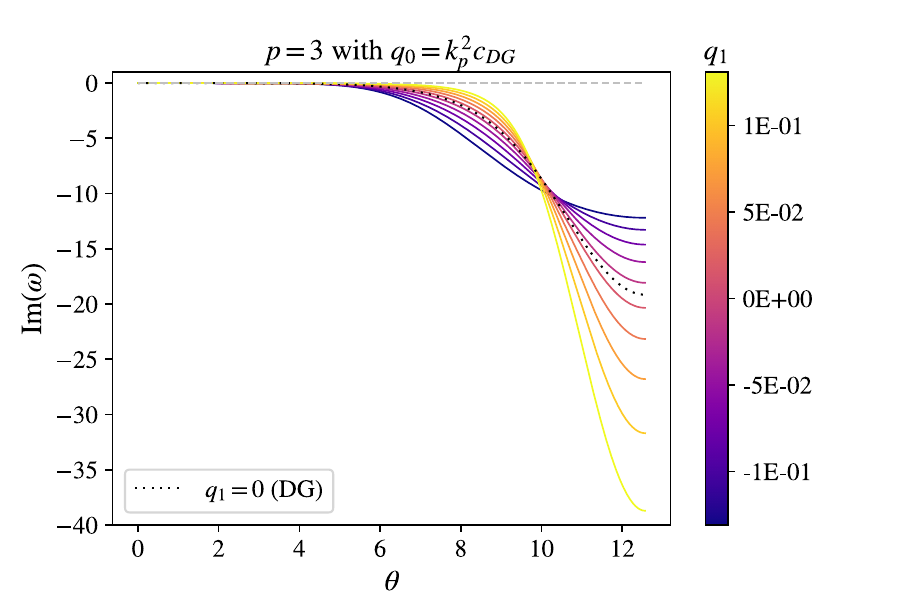}
        \caption{EESFR.}
    \end{subfigure}
    \begin{subfigure}{.49\textwidth}
        \centering
        \includegraphics[width=1\linewidth]{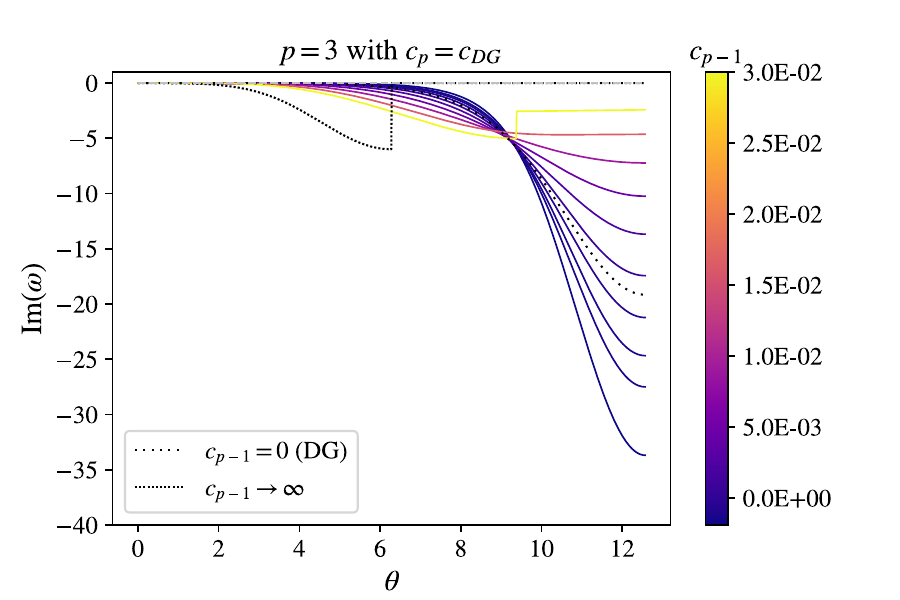}
        \caption{SSDG.}
    \end{subfigure}
    \caption{Dissipation relations for two-parameter EESFR and SSDG schemes with $p=3$, $c_{p} = c_{DG}$ and $q_0 = k_p^2 c_{DG}$ for an upwind numerical flux.}
    \label{fig:ch5_DG_dissip}
\end{figure}

\begin{figure}
    \centering
    \begin{subfigure}{.49\textwidth}
        \centering
        \includegraphics[width=1\linewidth]{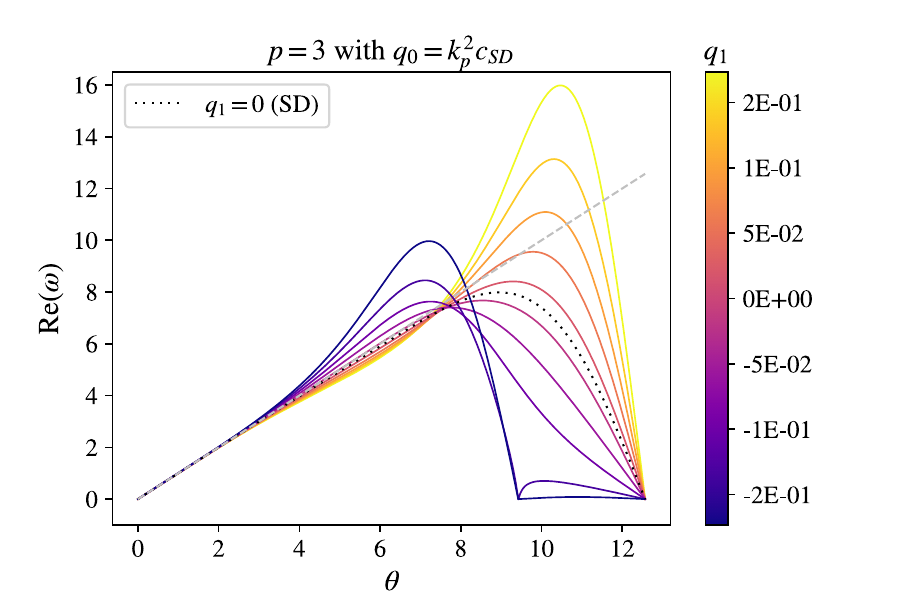}
        \caption{EESFR.}
    \end{subfigure}
    \begin{subfigure}{.49\textwidth}
        \centering
        \includegraphics[width=1\linewidth]{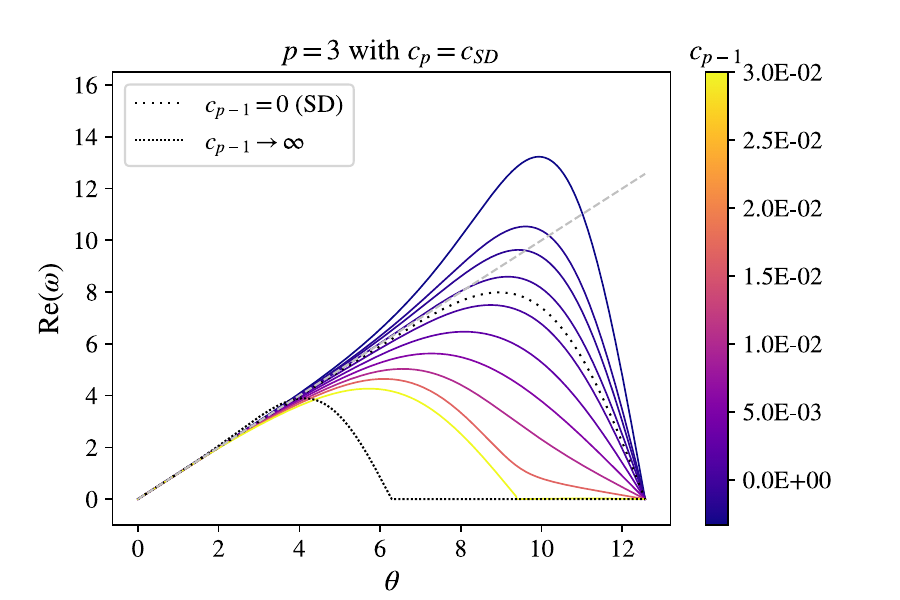}
        \caption{SSDG.}
    \end{subfigure}
    \caption{Dispersion relations for two-parameter EESFR and SSDG schemes with $p=3$, $c_{p} = c_{SD}$ and $q_0 = k_p^2 c_{SD}$ for an upwind numerical flux.}
    \label{fig:ch5_SD_dispers}
\end{figure}
\begin{figure}
    \centering
    \begin{subfigure}{.49\textwidth}
        \centering
        \includegraphics[width=1\linewidth]{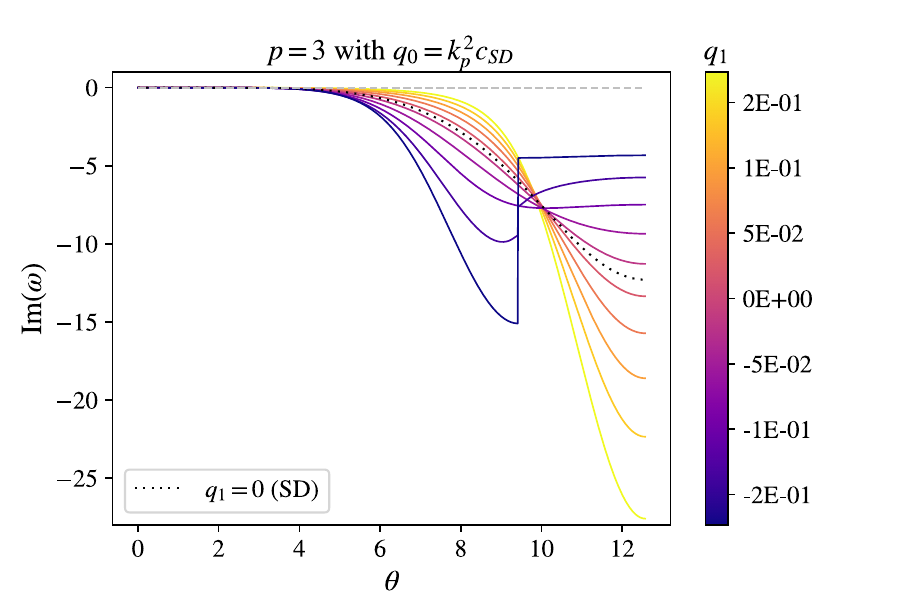}
        \caption{EESFR.}
    \end{subfigure}
    \begin{subfigure}{.49\textwidth}
        \centering
        \includegraphics[width=1\linewidth]{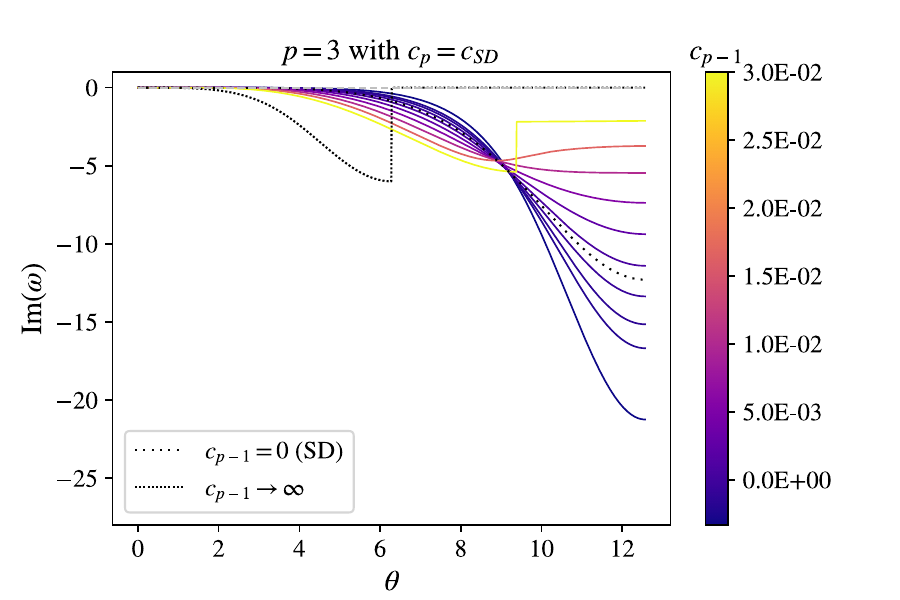}
        \caption{SSDG.}
    \end{subfigure}
    \caption{Dissipation relations for two-parameter EESFR and SSDG schemes with $p=3$, $c_{p} = c_{SD}$ and $q_0 = k_p^2 c_{SD}$ for an upwind numerical flux.}
    \label{fig:ch5_SD_dissip}
\end{figure}

\begin{figure}
    \centering
    \begin{subfigure}{.49\textwidth}
        \centering
        \includegraphics[width=1\linewidth]{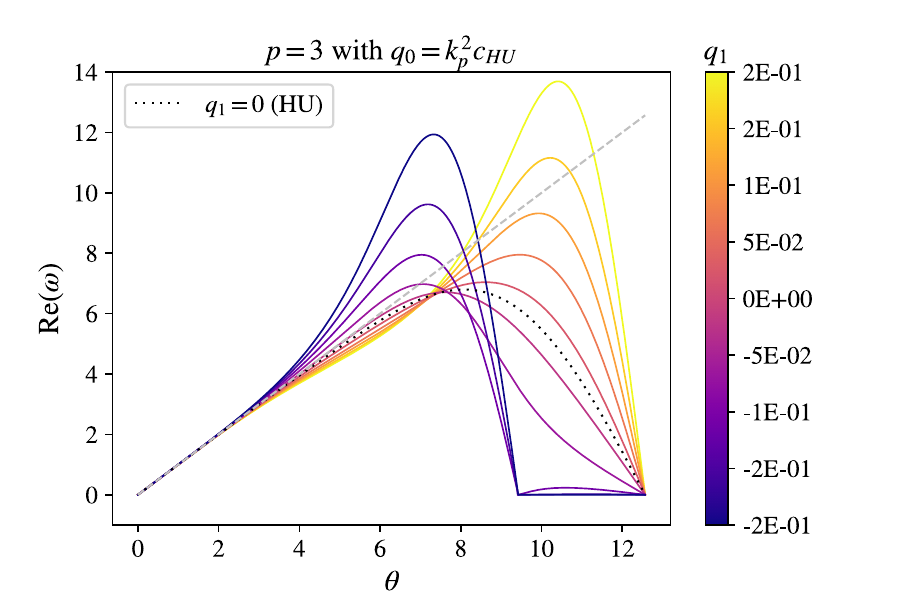}
        \caption{EESFR.}
    \end{subfigure}
    \begin{subfigure}{.49\textwidth}
        \centering
        \includegraphics[width=1\linewidth]{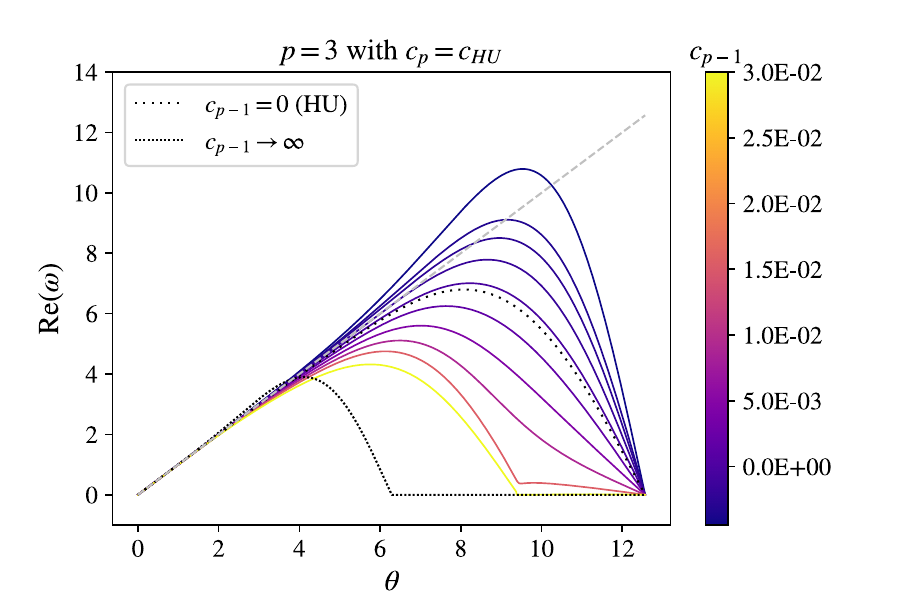}
        \caption{SSDG.}
    \end{subfigure}
    \caption{Dispersion relations for two-parameter EESFR and SSDG schemes with $p=3$, $c_{p} = c_{HU}$ and $q_0 = k_p^2 c_{HU}$ for an upwind numerical flux.}
    \label{fig:ch5_HU_dispers}
\end{figure}
\begin{figure}
    \centering
    \begin{subfigure}{.49\textwidth}
        \centering
        \includegraphics[width=1\linewidth]{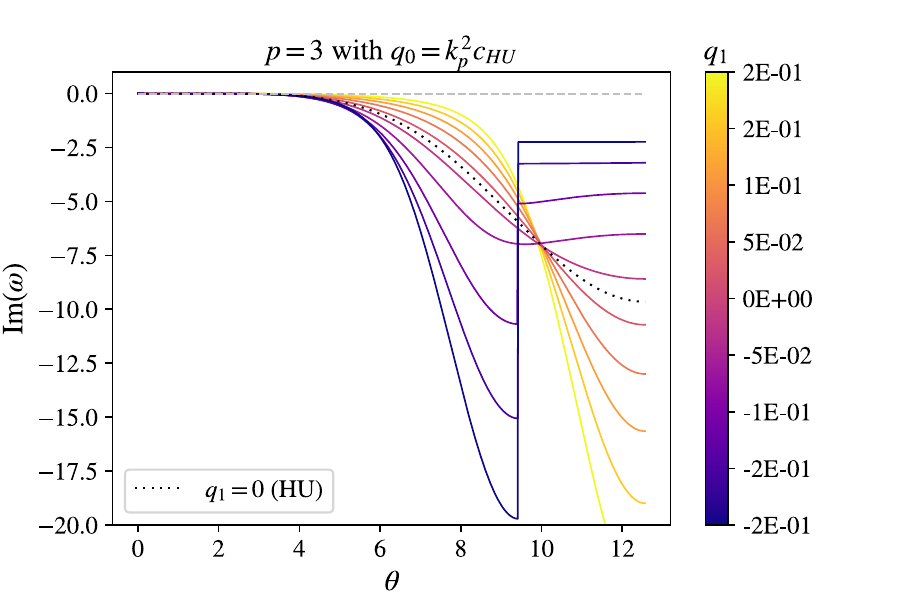}
        \caption{EESFR.}
    \end{subfigure}
    \begin{subfigure}{.49\textwidth}
        \centering
        \includegraphics[width=1\linewidth]{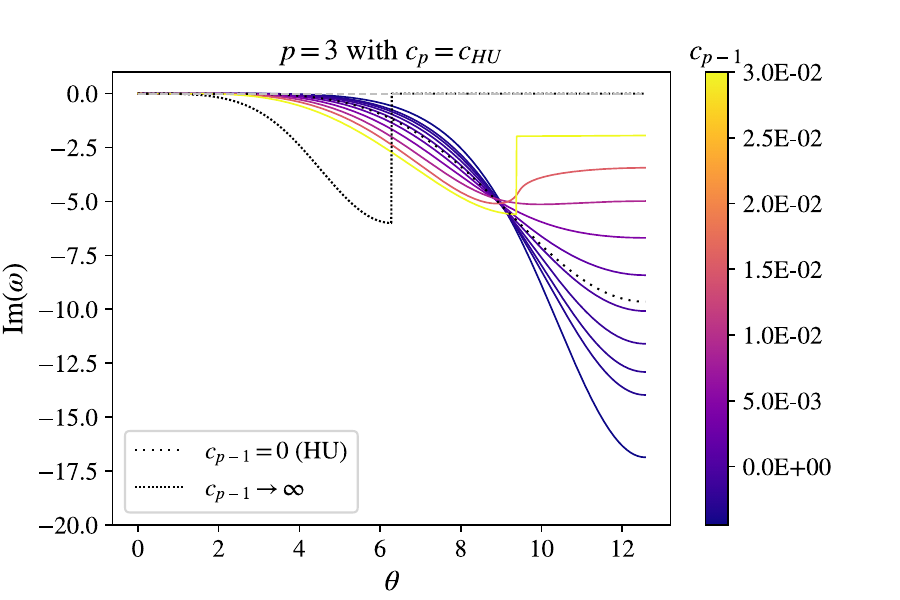}
        \caption{SSDG.}
    \end{subfigure}
    \caption{Dissipation relations for two-parameter EESFR and SSDG schemes with $p=3$, $c_{p} = c_{HU}$ and $q_0 = k_p^2 c_{HU}$ for an upwind numerical flux.}
    \label{fig:ch5_HU_dissip}
\end{figure}

\subsection{Spectral Order of Accuracy}
Figure \ref{fig:spectral_OOA} shows how the spectral order of accuracy of two-parameter EESFR and SSDG discretizations varies over a range of scheme parameters. Axes have been normalized in Figure \ref{fig:spectral_OOA_EESFR} to facilitate the comparison between the parameter spaces of EESFR and SSDG schemes. This normalization is based on Eq.(\ref{eq:param_scale}) and ensures that parameter values for EESFR and SSDG schemes coincide for $q_1 = c_{p-1} = 0$ and $q_0, q_1 \to \infty$. For clarity, the stability regions of both types of schemes have been shaded out. In this study, $\theta^h_R$ was set to $\pi / 4$, which is the value selected by \cite{huynh2007flux, vincent2011insights, vincent2015extended} for the analysis of third-order schemes. As can be seen, although both types of schemes feature fundamentally different dispersion-dissipation profiles and stability regions, their qualitative behavior appears to be nearly identical in the vicinity of $q_1 = 0$ and $c_{p-1} = 0$. When $q_1=0$ or $c_{p-1}=0$, the familiar features of ESFR schemes \cite{vincent2011insights} are recovered as expected. Namely, $A_T \approx 2p+1$ for the DG scheme and progressively decreases to $A_T \approx 2p-1$ as $q_0$ or $c_{p}$ are increased. However, although all schemes studied are superaccurate, ESFR-like behavior appears to be confined to a narrow ridge in the parameter space of EESFR and SSDG schemes. In both cases, any small deviation about $q_1 = 0$ or $c_{p-1} = 0$ leads the spectral order of accuracy to drop to $A_T \approx 2(p-1)$.

For SSDG schemes, it was observed that $A_T \approx 2(p-2)+1$ as $c_{p-1} \to \infty$, which corresponds to the spectral order of accuracy of a DG scheme of order $p-2$, as expected. A similar behavior does not exist for EESFR schemes, which are rendered unstable as $q_1 \to \infty$. For EESFR schemes, the results presented are consistent with those reported by \cite{vermeire2016properties}.
\begin{figure}
    \centering
    \begin{subfigure}{.49\textwidth}
        \centering
        \includegraphics[width=1\linewidth]{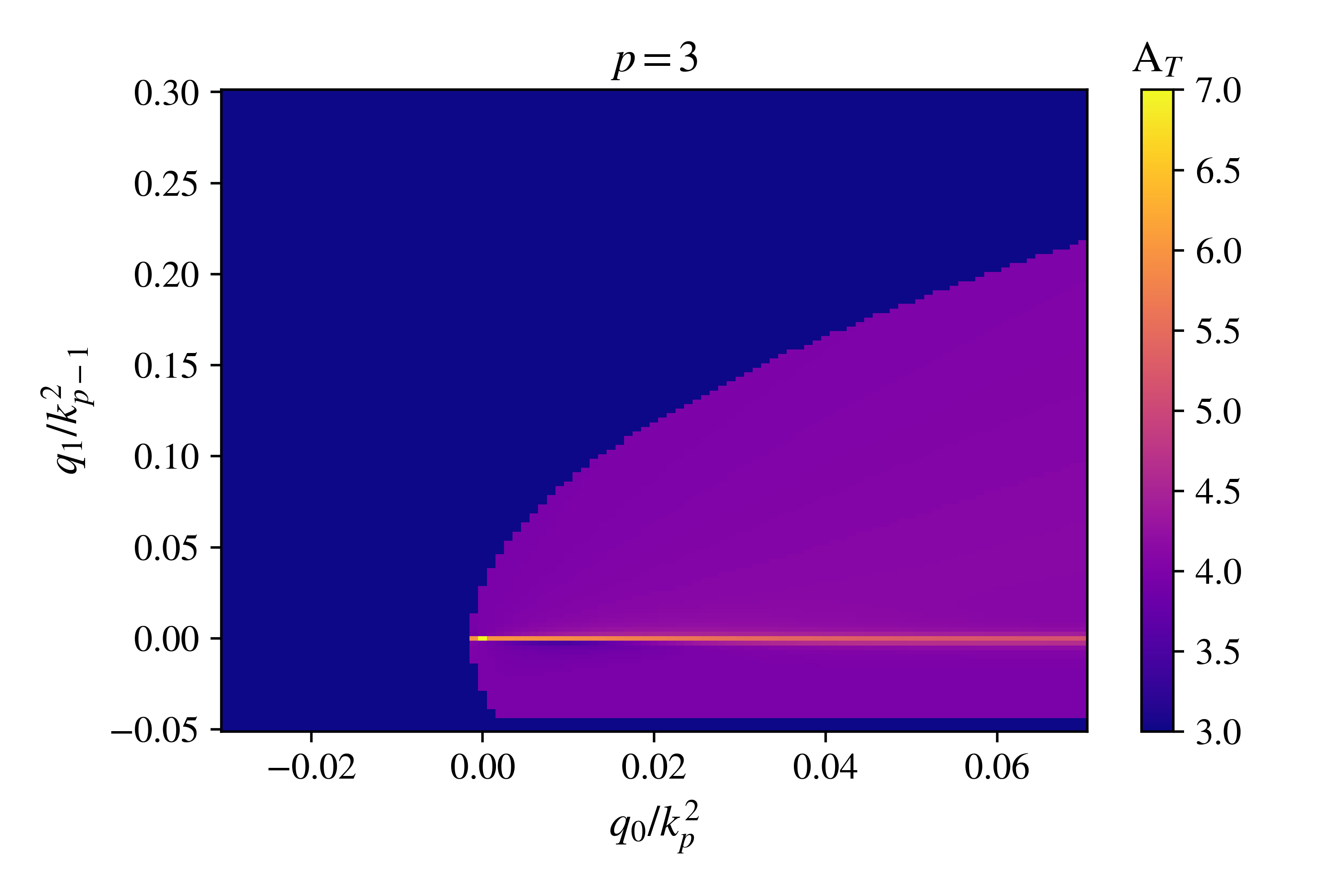}
        \caption{EESFR.}
        \label{fig:spectral_OOA_EESFR}
    \end{subfigure}
    \begin{subfigure}{.49\textwidth}
        \centering
        \includegraphics[width=1\linewidth]{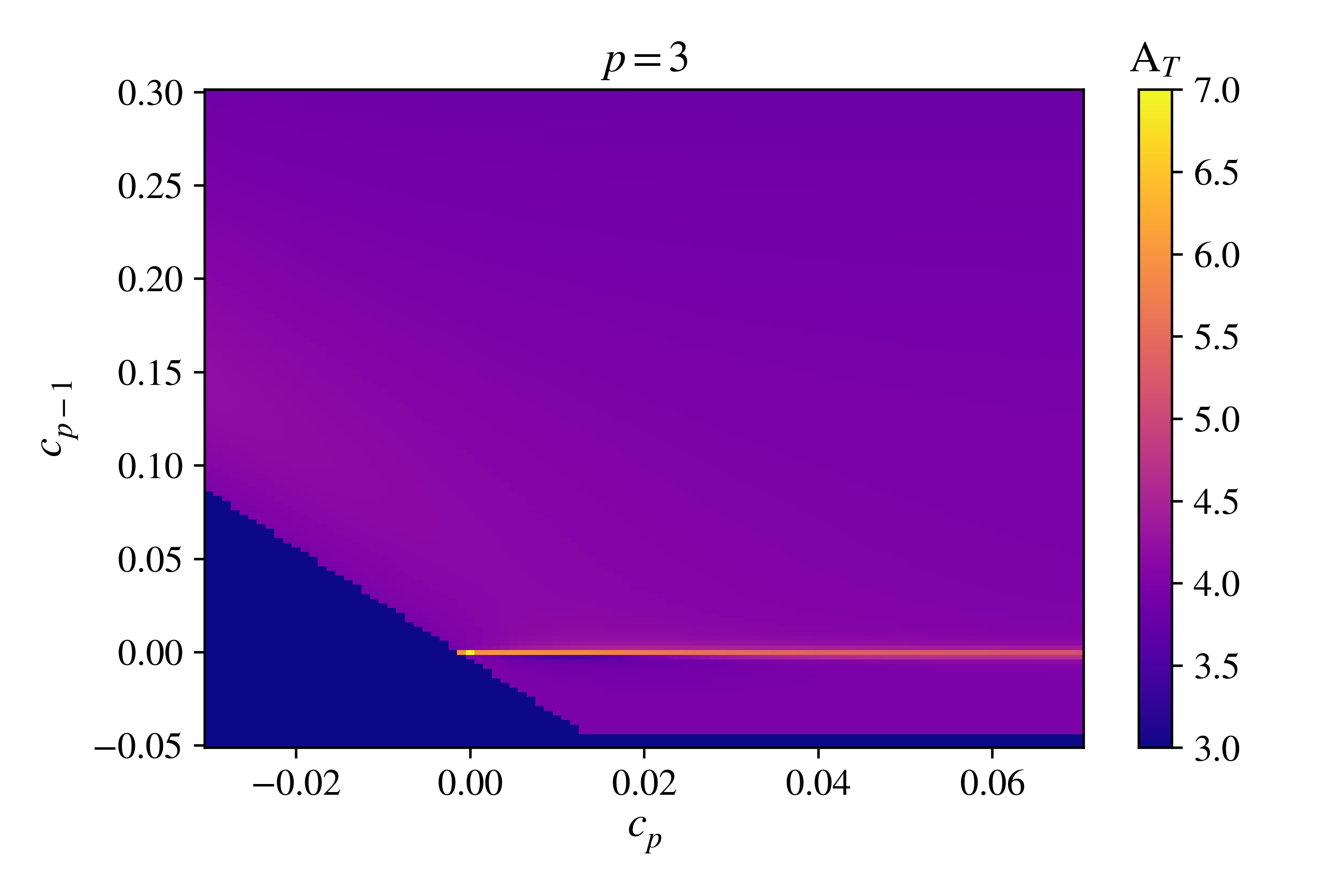}
        \caption{SSDG.}
    \end{subfigure}
    \caption{Spectral order of accuracy for two-parameter EESFR and SSDG schemes with $p=3$ and an upwind numerical flux.}
    \label{fig:spectral_OOA}
\end{figure}

\subsection{Stability}
Figures \ref{fig:CFL_RK33}, \ref{fig:CFL_RK44}, and \ref{fig:CFL_RK45} show the CFL time-step limit contours of two-parameter EESFR and SSDG schemes with respect to the scheme parameters for RK33, RK44 and RK45 explicit time-stepping schemes. As done previously, the axes are normalized for EESFR schemes to facilitate comparison with the parameter space of SSDG schemes. It is clear that the stability behavior of EESFR schemes differs significantly from that of SSDG schemes, although both frameworks can be employed to increase the CFL limit of ESFR schemes, no matter the type of time-stepping scheme used.

EESFR schemes feature a parabolic stability region with a narrow centered band of schemes associated with a high CFL limit. The CFL limit is maximized along that band for some positive and finite values of scheme parameters. As can be seen, the CFL limit varies non-monotonically with respect to both $q_0$ and $q_1$. It should be noted that the results presented in this paper regarding the locus of the maximum CFL for EESFR schemes differ from those reported by \cite{vermeire2016properties}. More precisely, \cite{vermeire2016properties} reports that the CFL limit for EESFR schemes is maximized in the limit $q_0 \to \infty$ when $q_1 = k_{p-1}^2c_{EESFR}^\tau$. It, however, appears that the more narrow range of scheme parameters considered by \cite{vermeire2016properties} did not allow them to observe the non-monotonicity of $\tau_{CFL}$ with respect to $q_1$ and thus prevented them from identifying the true global maximizers of the CFL contours for EESFR schemes. It should nevertheless be noted that when the range of scheme parameters considered is restricted to that studied by \cite{vermeire2016properties}, the stability contours presented in this paper agree with those reported by in their study.

For SSDG schemes, the stability region is polygonal and $\tau_{CFL}$ is maximized in a narrow band of high-CFL schemes located relatively close to the stability boundary described by Eq.(\ref{eq:ch5_stability_constraint2}). Schemes located in this region of high $\tau_{CFl}$ feature a negative value of $c_p$ and a relatively large positive value of $c_{p-1}$, which ensures their stability. For positive values of $c_p$ and $c_{p-1}$, SSDG schemes also feature a significant region of high CFL limit. Under the current normalization of scheme parameters, the extent of the latter significantly exceeds the stability region of EESFR schemes. As for EESFR schemes, the CFL limit of SSDG schemes varies non-monotonically with respect to both scheme parameters. Moreover, since both EESFR and SSDG schemes collapse to ESFR schemes of order $p-1$ as $q_0, c_p \to \infty,$ the behavior of $\tau_{CFL}$ for SSDG and EESFR schemes is identical in the limit $c_p, q_0 \to \infty$.

Maximum values of $\tau_{CFL}$ and their associated scheme parameters are recorded in Table \ref{tab:max_CFL} for EESFR and SSDG schemes of order 3 and 4. As can be seen, for all the temporal discretizations considered, the maximum CFL which is achievable via the EESFR and SSDG frameworks are nearly identical.
\begin{figure}
    \centering
    \begin{subfigure}{.49\textwidth}
        \centering
        \includegraphics[width=1\linewidth]{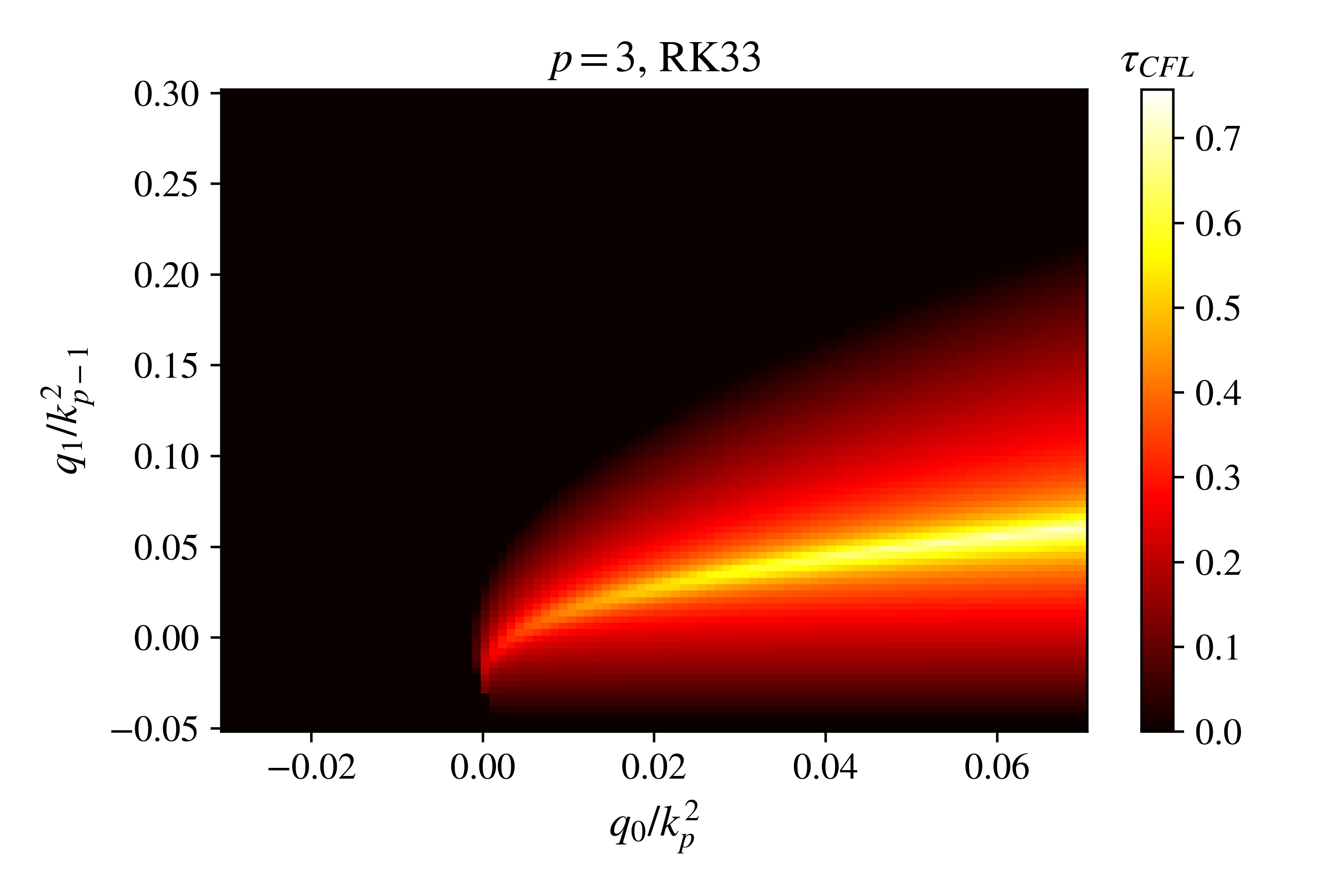}
        \caption{EESFR.}
    \end{subfigure}
    \begin{subfigure}{.49\textwidth}
        \centering
        \includegraphics[width=1\linewidth]{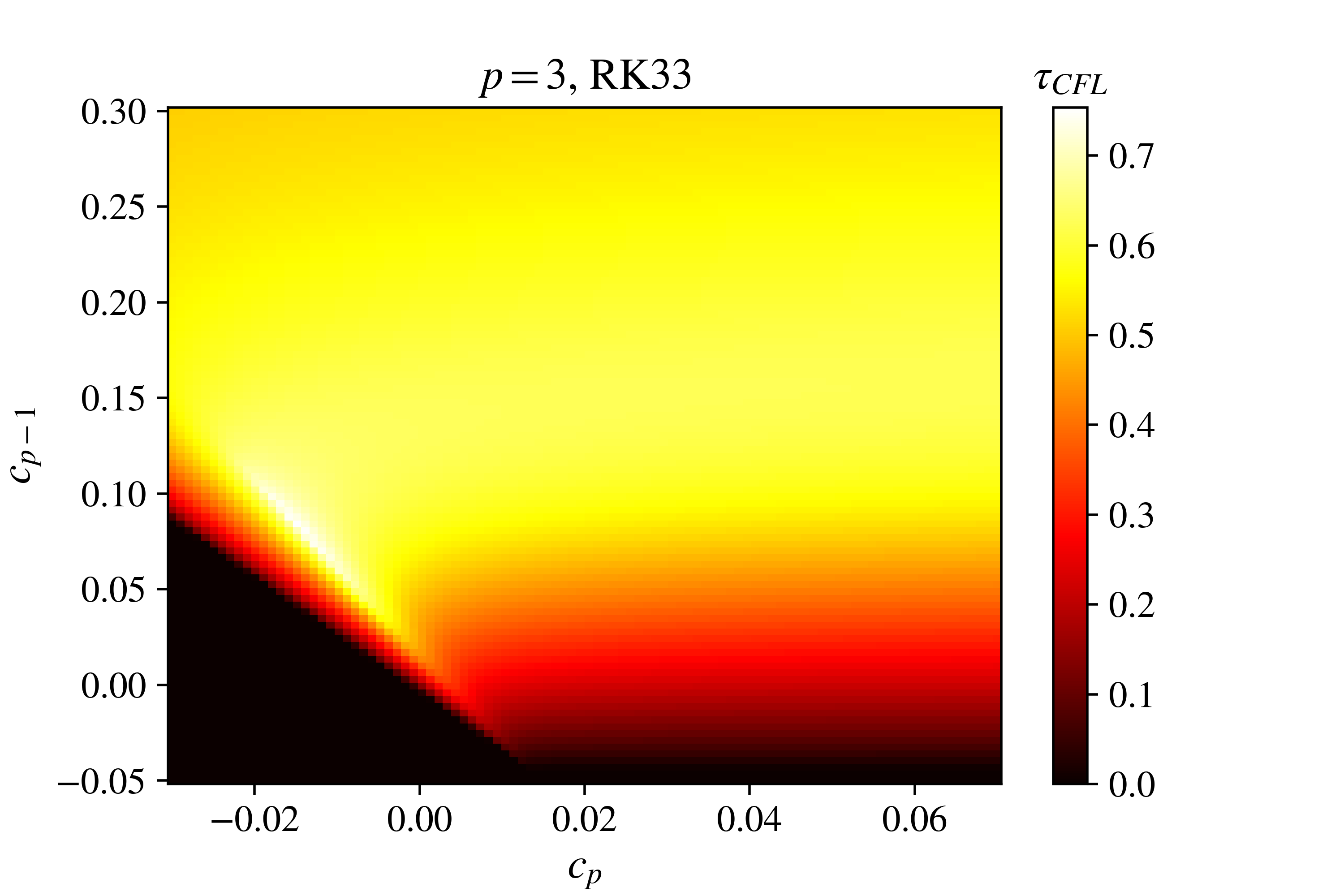}
        \caption{SSDG.}
    \end{subfigure}
    \caption{Explicit time-stepping limit for two-parameter EESFR and SSDG schemes with $p=3$, an upwind numerical flux and RK33 time-stepping.}
    \label{fig:CFL_RK33}
\end{figure}
\begin{figure}
    \centering
    \begin{subfigure}{.49\textwidth}
        \centering
        \includegraphics[width=1\linewidth]{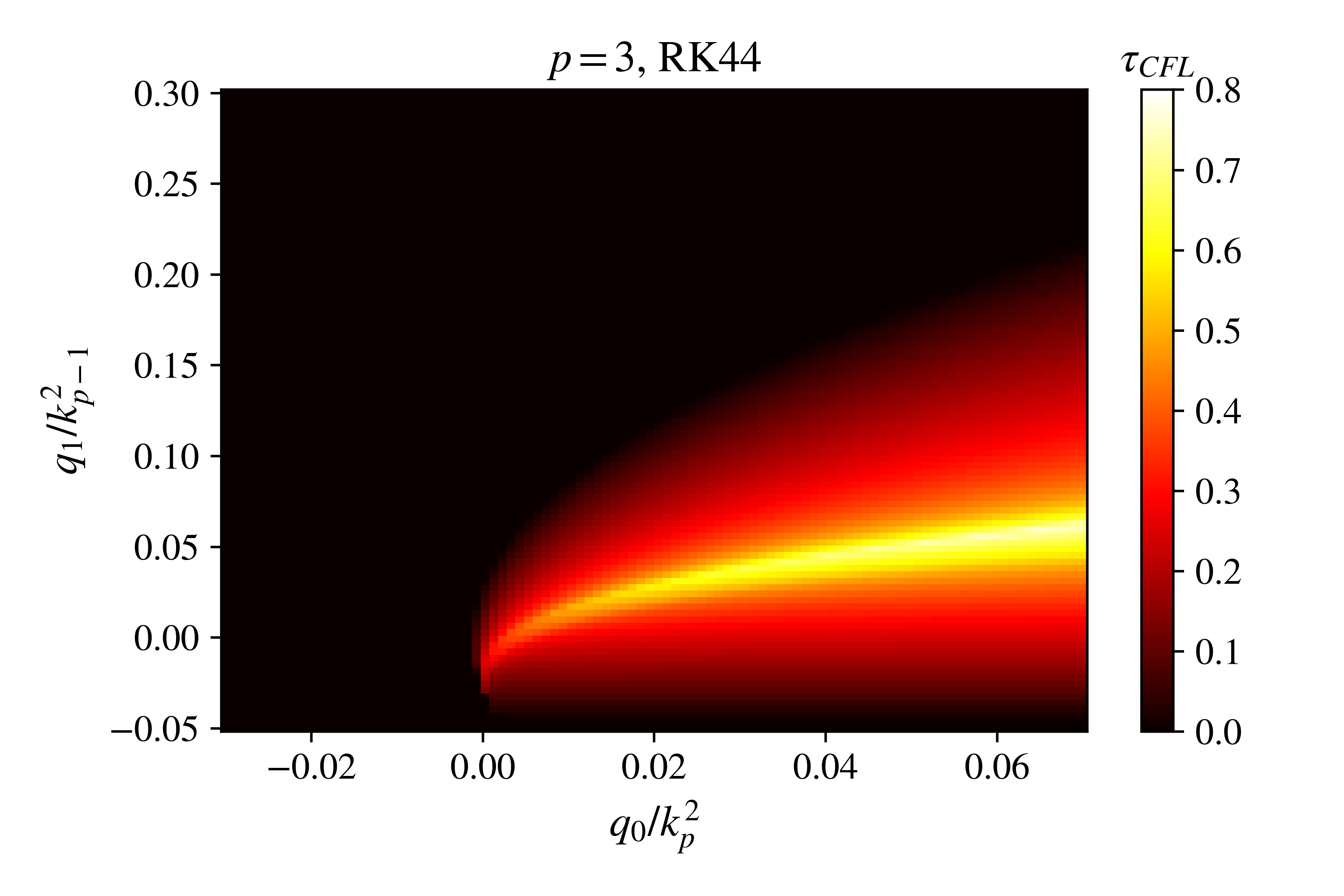}
        \caption{EESFR.}
    \end{subfigure}
    \begin{subfigure}{.49\textwidth}
        \centering
        \includegraphics[width=1\linewidth]{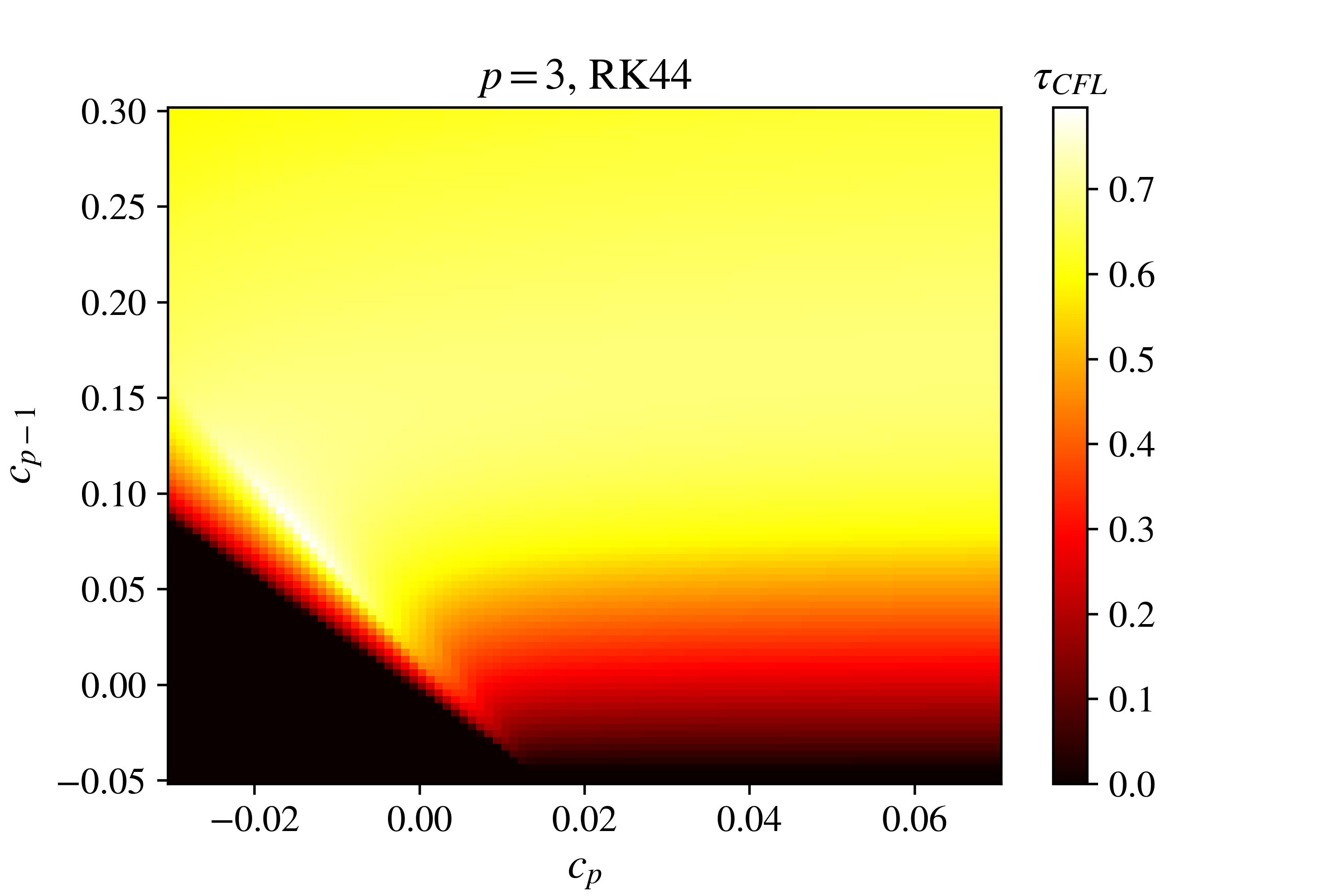}
        \caption{SSDG.}
    \end{subfigure}
    \caption{Explicit time-stepping limit for two-parameter EESFR and SSDG schemes with $p=3$, an upwind numerical flux and RK44 time-stepping.}
    \label{fig:CFL_RK44}
\end{figure}
\begin{figure}
    \centering
    \begin{subfigure}{.49\textwidth}
        \centering
        \includegraphics[width=1\linewidth]{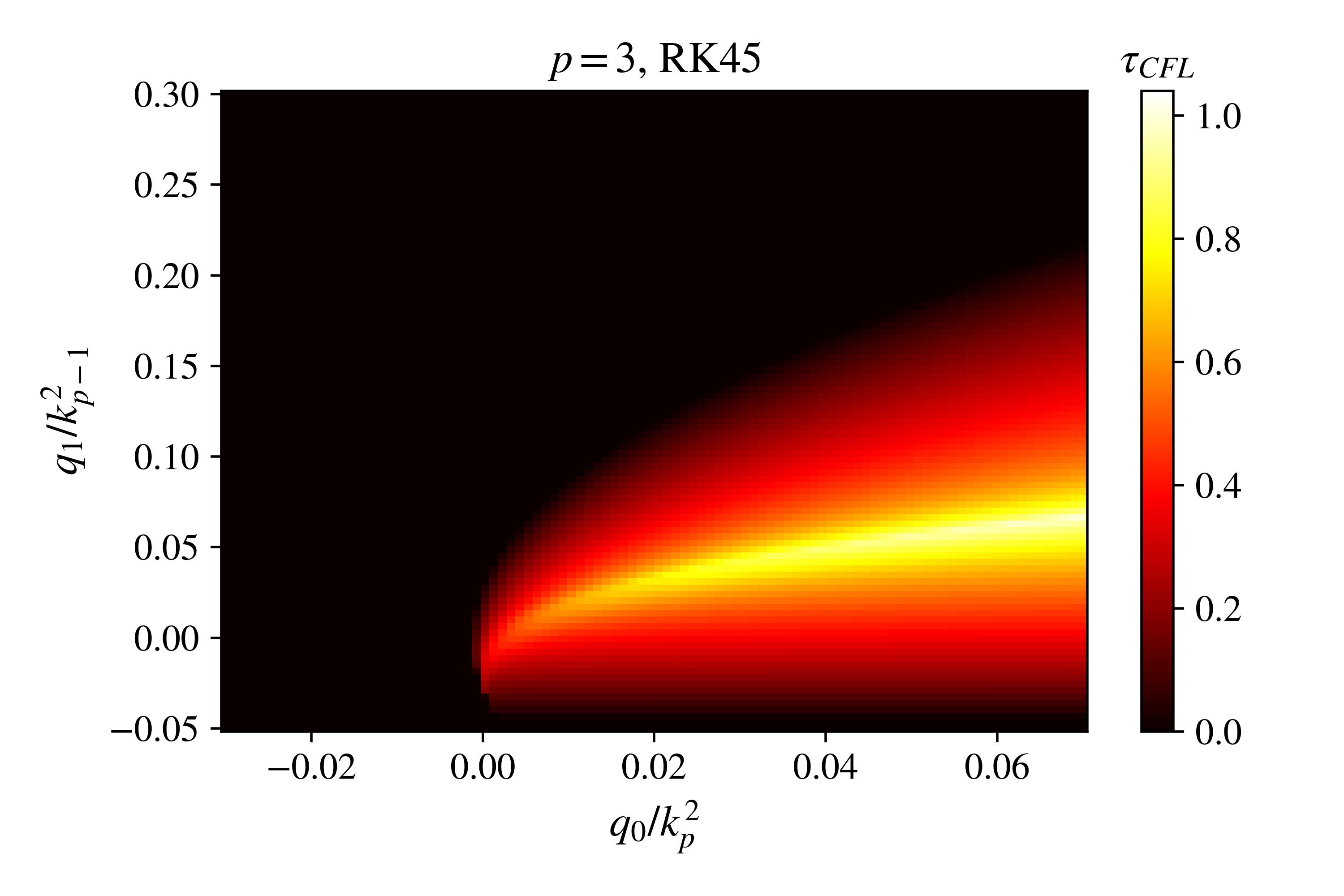}
        \caption{EESFR.}
    \end{subfigure}
    \begin{subfigure}{.49\textwidth}
        \centering
        \includegraphics[width=1\linewidth]{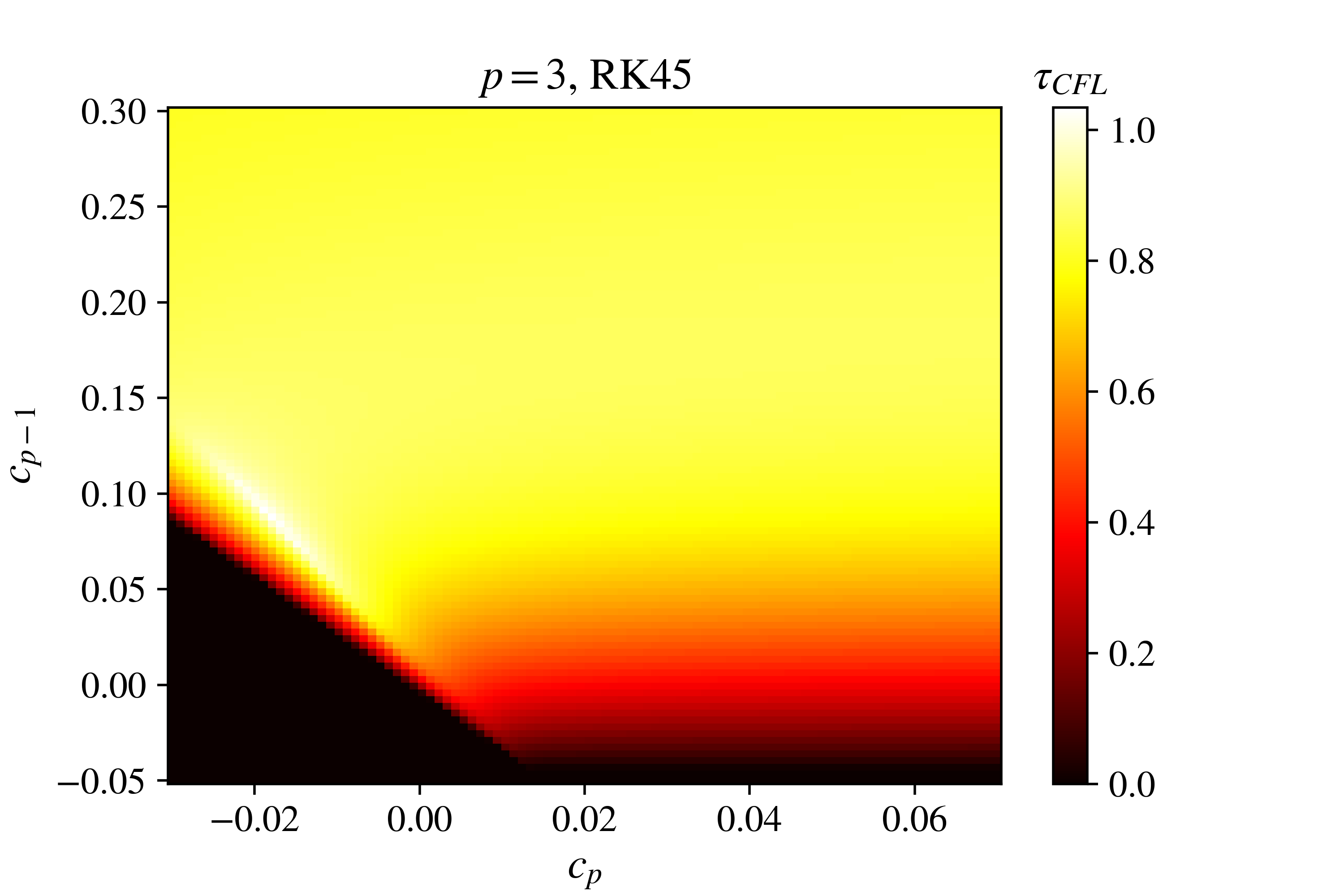}
        \caption{SSDG.}
    \end{subfigure}
    \caption{Explicit time-stepping limit for two-parameter EESFR and SSDG schemes with $p=3$, an upwind numerical flux and RK45 time-stepping.}
    \label{fig:CFL_RK45}
\end{figure}
\begin{table}
    \centering
    \begin{tabular}{cc|cc|cc}
         & & SSDG, $(c_{p}, c_{p-1})$  & $\tau_{CFL}$ & EESFR, $(q_0, q_1)$  & $\tau_{CFL}$  \\
         \hline
         $p=3$ & RK33  & ($-1.42 \times 10^{-2}$, $8.06 \times 10^{-2}$)  & 0.757 & ($29.4$, $0.761$)
         & 0.758 \\
         & RK44 & ($-1.52 \times 10^{-2}$, $8.36 \times 10^{-2}$) & 0.800 & ($29.6$, $0.772$) & 0.800 \\
         & RK45& ($-1.72 \times 10^{-2}$, $8.43 \times 10^{-2}$)  & 1.039 & ($24.9$, $0.757$) & 1.038 \\
         & &  & \\
         $p=4$ & RK33  & ($-3.70 \times 10^{-4}$, $1.54 \times 10^{-3}$)  &  0.413 & ($9.38$, $0.349$) & 0.413\\
         & RK44 & ($-3.76 \times 10^{-4}$, $1.56 \times 10^{-3}$) & 0.437 & ($9.23$, $0.350$) & 0.437  \\
         & RK45 & ($-4.00 \times 10^{-4}$, $1.57 \times 10^{-3}$)  & 0.565 & ($8.19$, $0.351$) & 0.565 \\
    \end{tabular}
    \caption{Maximum CFL time-step limit of EESFR and SSDG schemes of orders 3 and 4 for RK33, RK44 and RK45 explicit time-stepping schemes and an upwind numerical flux.}
    \label{tab:max_CFL}
\end{table}

\subsection{Comparison of Schemes}
To complete the characterization and comparison of the linear properties of EESFR and SSDG schemes, we consider plotting the CFL time-step limit versus the order of accuracy of the schemes for key values of $c_p$ and $q_0$. In this case, we compute the order of accuracy by solving the linear advection problem with $a=2$ on the periodic domain $[-\pi, \pi]$ for initial conditions given by
\begin{equation}
    u(x,0) = \sin(x).
\end{equation}
The solution is evolved until $t = \pi$ and the L2 error is computed using a quadrature of sufficient strength while the mesh is progressively refined. The maximum number of elements used for these purposes was $N=256$. It should be noted that this approach differs from that previously discussed, as the order of accuracy computed in this section includes all types of errors pertaining to the numerical scheme, whereas $A_T$ only accounts for errors arising from dispersion and dissipation.

Results for EESFR and SSDG schemes for $c_p \in \{c_{DG}, c_{SD}, c_{HU}\}$ and $q_0 \in \{k_p^2c_{DG}, k_p^2c_{SD}, k_p^2c_{HU}\}$ are shown in Figures \ref{fig:CFL_OOA_DG}, \ref{fig:CFL_OOA_SD} and \ref{fig:CFL_OOA_HU}. For concision, only data for RK44 explicit time-stepping is included. Once again, it is clear that the behavior of EESFR schemes differs greatly from that of SSDG schemes. While the EESFR schemes considered are able to retain an order of accuracy of $p+1$ for $q_1 \neq 0$, the order of accuracy of SSDG schemes drops rapidly from $p+1$ to $p$ whenever $c_{p-1} \neq 0$. This result is somewhat to be expected as FR schemes are effectively constructed by filtering the face-terms of the classical DG scheme (\textit{ie.}, by introducing a modified lifting operator), while FDG schemes are obtained by applying a linear filter to the complete DG residual. Moreover, for the values of $c_p$ and $q_0$ considered, it is clear that SSDG schemes are able to reach much higher values of $\tau_{CFL}$, which is consistent with the high stability of these schemes, which can be observed over a large region in the associated parameter space. In fact, in all cases, the maximum value of $\tau_{CFL}$ that can be achieved with ESFR schemes can be exceeded with SSDG schemes, which is not the case with EESFR schemes, as shown in Figures \ref{fig:CFL_OOA_DG}, \ref{fig:CFL_OOA_SD} and \ref{fig:CFL_OOA_HU}.
\begin{figure}
    \centering
    \begin{subfigure}{.49\textwidth}
        \centering
        \includegraphics[width=1\linewidth]{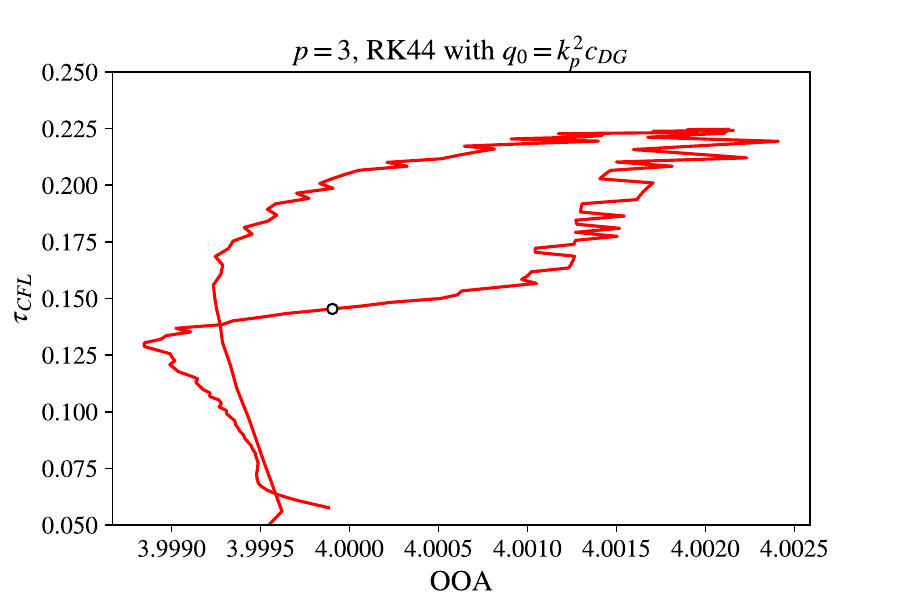}
        \caption{EESFR.}
    \end{subfigure}
    \begin{subfigure}{.49\textwidth}
        \centering
        \includegraphics[width=1\linewidth]{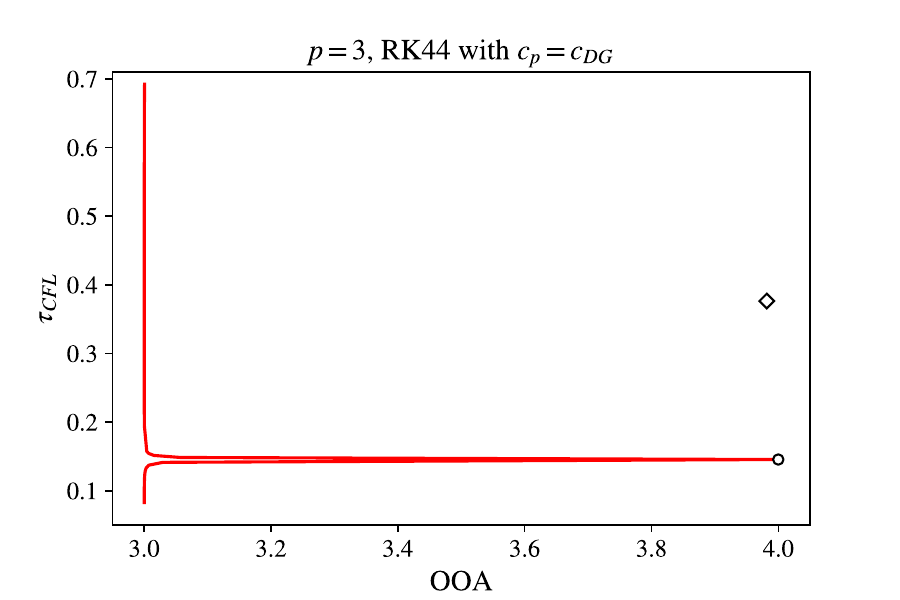}
        \caption{SSDG.}
    \end{subfigure}
    \caption{CFL time-stepping limit versus order of accuracy for EESFR and SSDG schemes with $c_p = c_{DG}$ and $q_0 = k_p^2c_{DG}$, $p=3$, an upwind numerical flux and RK44 time-stepping. Circle is at $c_{p-1}, q_1 = 0$ (DG), and diamond denotes the ESFR scheme with the maximum CFL limit.}
    \label{fig:CFL_OOA_DG}
\end{figure}
\begin{figure}
    \centering
    \begin{subfigure}{.49\textwidth}
        \centering
        \includegraphics[width=1\linewidth]{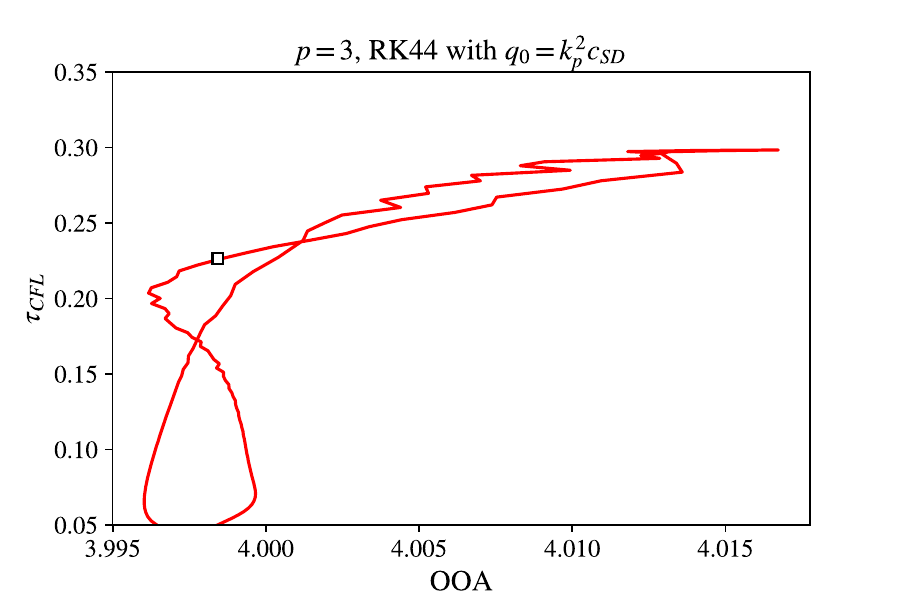}
        \caption{EESFR.}
    \end{subfigure}
    \begin{subfigure}{.49\textwidth}
        \centering
        \includegraphics[width=1\linewidth]{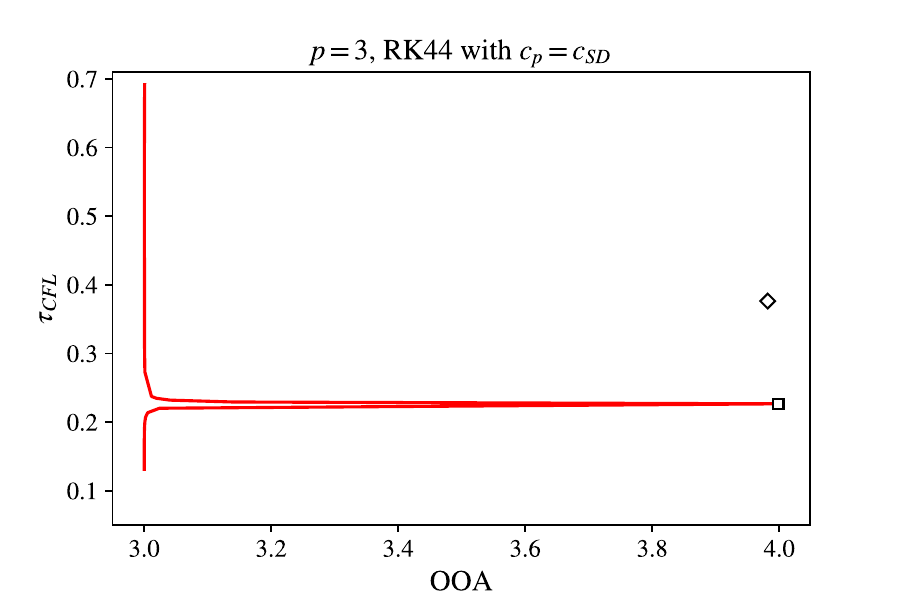}
        \caption{SSDG.}
    \end{subfigure}
    \caption{CFL time-stepping limit versus order of accuracy for EESFR and SSDG schemes with $c_p = c_{SD}$ and $q_0 = k_p^2c_{SD}$, $p=3$, an upwind numerical flux and RK44 time-stepping. Square is at $c_{p-1}, q_1 = 0$ (SD), and diamond denotes the ESFR scheme with the maximum CFL limit.}
    \label{fig:CFL_OOA_SD}
\end{figure}
\begin{figure}
    \centering
    \begin{subfigure}{.49\textwidth}
        \centering
        \includegraphics[width=1\linewidth]{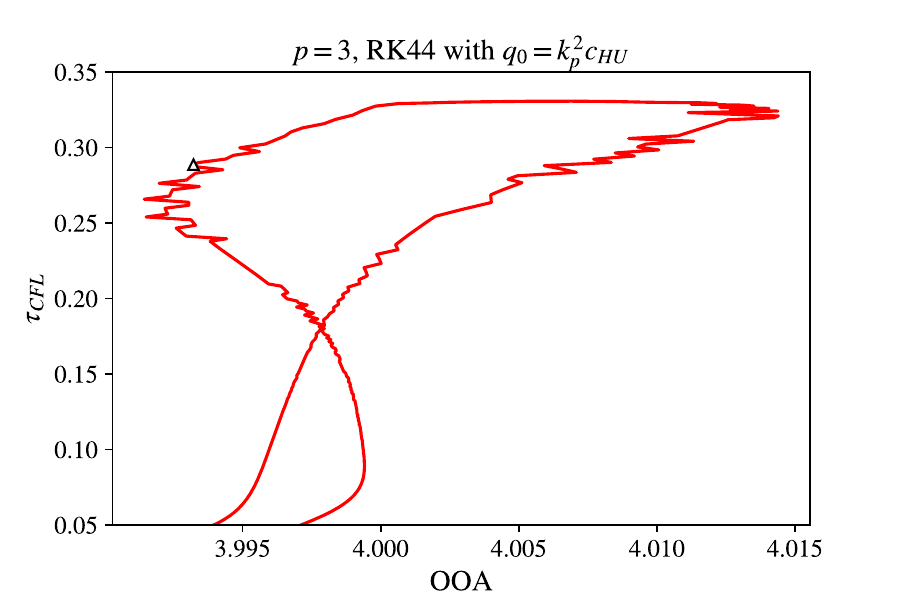}
        \caption{EESFR.}
    \end{subfigure}
    \begin{subfigure}{.49\textwidth}
        \centering
        \includegraphics[width=1\linewidth]{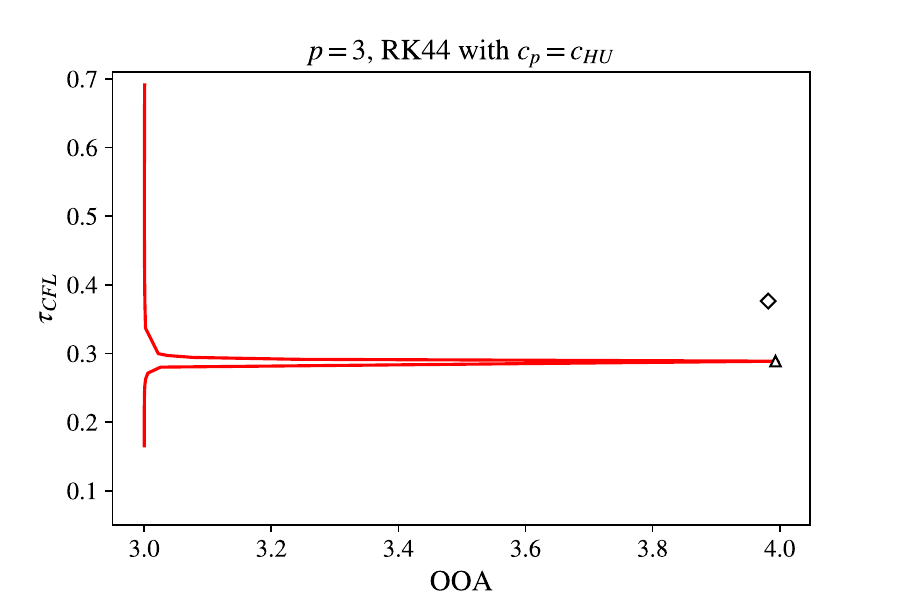}
        \caption{SSDG.}
    \end{subfigure}
    \caption{CFL time-stepping limit versus order of accuracy for EESFR and SSDG schemes with $c_p = c_{HU}$ and $q_0 = k_p^2c_{HU}$, $p=3$, an upwind numerical flux and RK44 time-stepping. Triangle is at $c_{p-1}, q_1 = 0$ (HU), and diamond denotes the ESFR scheme with the maximum CFL limit.}
    \label{fig:CFL_OOA_HU}
\end{figure}

Finally, using the linear advection test case previously described, the L2 error for the third-order EESFR and SSDG schemes yielding the maximum RK44 CFL time-step limit (see Table \ref{tab:max_CFL}) was also computed as the mesh was progressively refined. Results are shown in Figure \ref{fig:error_CFLmax}. As can be seen, while both schemes are associated with the same CFL limit of $\tau_{CFL} = 0.800$, the EESFR scheme achieves an order of accuracy of $p+1$ while the SSDG scheme is only $p$th order accurate.

Given that the order and the CFL limit significantly affect the computational cost of a given numerical scheme, it is tempting to use the results from the linear study presented in this section to infer that EESFR schemes are more suitable than SSDG schemes for CFD applications. It is, however, the opinion of the authors of this work that this former statement should be nuanced in the light of the objectives of this paper. Namely, by design, SSDG schemes are particularly simple to implement in existing DG codes and extension of the simple 1D formulation presented in this paper to triangular elements is straightforward following the work of \cite{zwanenburg2016equivalence}. Moreover, the FDG structure of the SSDG framework ensures compatibility of the latter with the NSFR approach developed by \cite{cicchino2022nonlinearly}, which is not the case for EESFR schemes. Finally, the comparatively large range of scheme parameters over which the CFL limit can be increased through the SSDG framework could be an interesting feature for applications in which the CFL limit is a significant bottleneck.
\begin{figure}
        \centering
        \includegraphics[width=0.7\linewidth]{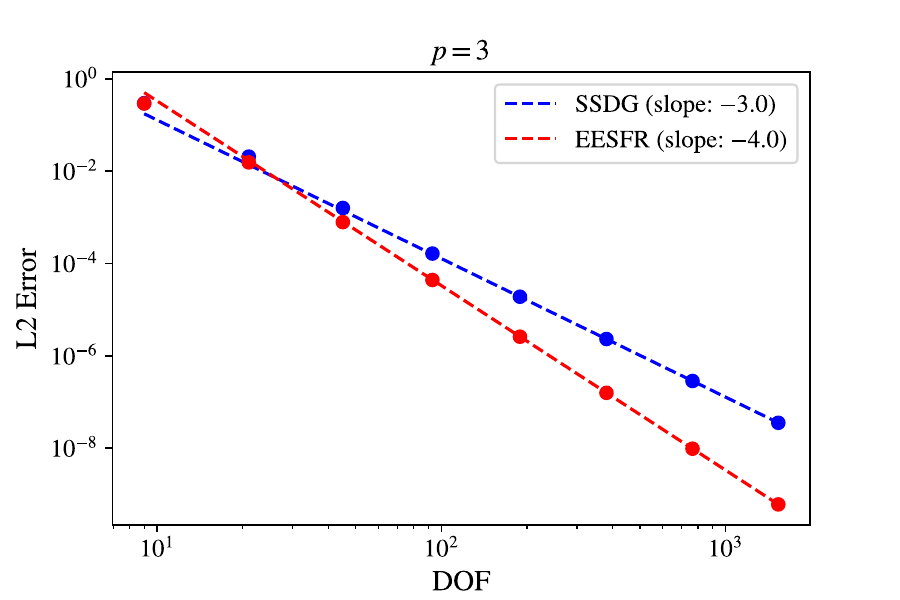}
        \caption{L2 Error for $p=3$ EESFR and SSDG schemes yielding the largest RK44 CFL time-step limit.}
        \label{fig:error_CFLmax}
\end{figure}

\section{Conclusion}
In this work, using the FDG framework, we have introduced SSDG schemes, a new conservative and linearly stable generalization of ESFR schemes wich is compatible with the NSFR approach developed by \cite{cicchino2022nonlinearly}. For general numerical fluxes, the schemes generated via the EESFR and SSDG frameworks are distinct and only overlap for classical ESFR schemes.

The linear properties of two-parameter EESFR and SSDG schemes were moreover studied. Using Von Neumann analysis, it was found that while SSDG and EESFR schemes exhibit fundamentally different dispersive and dissipative behaviors, they can achieve very similar increases in CFL limit for all the types of explicit time-stepping schemes considered. The range of scheme parameters over which the CFL limit could be improved was, however, much larger for SSDG schemes than for EESFR schemes. Additionally, when considering the spectral order of accuracy, EESFR and SSDG schemes were observed to behave in a qualitatively identical way in the vicinity of $c_{p-1} = q_1 = 0$.

Finally, for the 1D linear advection problem, it was found that SSDG schemes were only $p$th order accurate under $h$-refinement when $c_{p-1} \neq 0$ while EESFR schemes could achieve $(p+1)$th order accuracy for $q_1 \neq 0$.

Future studies should include the relatively trivial extension of SSDG schemes to the NSFR framework developed by \cite{cicchino2022nonlinearly}. The feasibility of extending EESFR schemes to triangular and tetrahedral elements and to the NSFR framework should moreover be examined. Applications of SSDG schemes for shock capturing purposes, as investigated by \cite{shruthi2025investigation} for ESFR schemes, could also be considered.

\appendix
\section{Connection between FR and FDG Schemes}
\label{app1}
\begin{lemma}
\label{lem:equivalence}
    A FDG scheme can be expressed in FR form if and only if its associated filtering matrix $\mathbf{K} \in \mathbb{R}^{(p+1) \times (p+1)}$ is such that $\mathbf{M} + \mathbf{K}$ is invertible and
    \begin{equation}
        \mathbf{K}\mathbf{D} = 0.
    \end{equation}
    Additionally, a FR scheme can be expressed in FDG form if and only if its associated matrix $\mathbf{Q} \in \mathbb{R}^{(p+1) \times (p+1)}$ is such that $\mathbf{M} + \mathbf{Q}$ is invertible and $\exists \mathbf{K} \in \mathbb{R}^{(p+1) \times (p+1)}$ with $\mathbf{M} + \mathbf{K}$ invertible satisfying
    \begin{align}
    \mathbf{K}\mathbf{D} &= 0, \\
    (\mathbf{M} + \mathbf{K})^{-1} \mathbf{r} &= (\mathbf{M} + \mathbf{Q})^{-1} \mathbf{r},\\
    (\mathbf{M} + \mathbf{K})^{-1} \mathbf{l} &= (\mathbf{M} + \mathbf{Q})^{-1} \mathbf{l}.
\end{align}
\end{lemma}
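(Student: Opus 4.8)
The plan is to reduce the equivalence of the two formulations to a term-by-term matching of operators. First I would solve Eq.(\ref{eq:FDG_matrix}) for the time derivative by left-multiplying with $(\mathbf{M}+\mathbf{K})^{-1}$, writing the result as a volume contribution $-(\mathbf{M}+\mathbf{K})^{-1}\mathbf{M}\mathbf{D}\hat{\mathbf{f}}_k^h$ plus a boundary contribution $(\mathbf{M}+\mathbf{K})^{-1}(c_R\mathbf{r} - c_L\mathbf{l})$, where $c_R$ and $c_L$ denote the scalar flux jumps $\hat{f}_k^h - \hat{f}_k^*$ evaluated at $\xi=1$ and $\xi=-1$. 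The FR scheme Eq.(\ref{eq:FR_unconventional}) has the identical structure, but with volume operator $\mathbf{D}$ and boundary contribution $(\mathbf{M}+\mathbf{Q})^{-1}(c_R\mathbf{r}-c_L\mathbf{l})$. Two such schemes coincide precisely when their volume operators agree and their lifted boundary vectors agree.

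The key observation I would exploit is that the triple $(\hat{\mathbf{f}}_k^h, c_R, c_L)$ can be varied freely and independently: the interior flux $\hat{\mathbf{f}}_k^h$ is fixed by the elemental solution $\hat{\mathbf{u}}_k^h$, whereas the face jumps $c_R, c_L$ can be shifted independently by perturbing the neighbouring elemental states entering $\hat{f}_k^*$ through Eq.(\ref{eq:num_flux}) without altering $\hat{\mathbf{f}}_k^h$. Consequently, equality of the two semi-discrete operators for all admissible data is equivalent to the separate identities $(\mathbf{M}+\mathbf{K})^{-1}\mathbf{M}\mathbf{D} = \mathbf{D}$ together with $(\mathbf{M}+\mathbf{K})^{-1}\mathbf{r} = (\mathbf{M}+\mathbf{Q})^{-1}\mathbf{r}$ and $(\mathbf{M}+\mathbf{K})^{-1}\mathbf{l} = (\mathbf{M}+\mathbf{Q})^{-1}\mathbf{l}$. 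Since $\mathbf{M}+\mathbf{K}$ is invertible, the first identity is equivalent to $\mathbf{M}\mathbf{D} = (\mathbf{M}+\mathbf{K})\mathbf{D}$, i.e. to $\mathbf{K}\mathbf{D}=0$.

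With this reduction in hand, both halves of the lemma follow. For the first statement, $\mathbf{M}+\mathbf{K}$ invertible is required for the FDG scheme to be well posed, and the volume-matching identity forces $\mathbf{K}\mathbf{D}=0$, giving necessity; for sufficiency I would simply take $\mathbf{Q}=\mathbf{K}$, whereupon the two boundary identities hold trivially and $\mathbf{K}\mathbf{D}=0$ collapses the FDG volume operator to $\mathbf{D}$, so the FDG scheme is literally the FR scheme with $\mathbf{Q}=\mathbf{K}$. For the second statement, the same reduction shows that a matrix $\mathbf{K}$ reproduces the FR scheme with matrix $\mathbf{Q}$ if and only if $\mathbf{M}+\mathbf{K}$ is invertible, $\mathbf{K}\mathbf{D}=0$, and the two lifted-vector identities hold, which is exactly the stated existence condition (with $\mathbf{M}+\mathbf{Q}$ invertible needed for the FR scheme itself to be well posed).

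The one point demanding care, and the main obstacle, is the independence argument of the second paragraph: I must justify that the correct notion of ``expressible as'' is equality of the two schemes as maps rather than agreement on a single solution state, and that the interior flux data together with the two face jumps genuinely span enough of the data space to force the operators to agree. Establishing the independent controllability of $c_R$ and $c_L$ through the neighbouring states, so that the coefficients of $\mathbf{r}$ and $\mathbf{l}$ decouple (here invoking the linear independence of $\mathbf{r}$ and $\mathbf{l}$), is where the argument must be made precise; once that is granted, the remaining algebra is immediate.
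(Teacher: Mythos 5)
Your proposal is correct, but it reaches the conclusion by a different mechanism than the paper. The paper identifies the two schemes through their Von Neumann symbols: it demands $\mathbf{H}_{FR}(\theta)=\mathbf{H}_{FDG}(\theta)$ for all $\theta$ and then uses the linear independence of $\{1,e^{I\theta},e^{-I\theta}\}$ to decouple the volume term from the two lift terms, so the decoupling is done in frequency space on a uniform periodic mesh. You instead match the semi-discrete operators directly in physical space and obtain the decoupling from a controllability argument: the interior data $\hat{\mathbf{f}}_k^h$ and the two face jumps $c_R$, $c_L$ can be varied independently by perturbing the neighbouring elements, forcing $(\mathbf{M}+\mathbf{K})^{-1}\mathbf{M}\mathbf{D}=\mathbf{D}$ (hence $\mathbf{K}\mathbf{D}=0$) and the two lifted-vector identities separately; your sufficiency argument via $\mathbf{Q}=\mathbf{K}$ matches the paper's ``direct substitution.'' Your route is more elementary and does not presuppose the Bloch-wave setting, while the paper's route reuses machinery it needs anyway in Section \ref{sec:VonNeumann}. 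The one point you flag as delicate is indeed where the only restriction lives: for the numerical flux of Eq.(\ref{eq:num_flux}), $\partial \hat{f}^*/\partial u^{+}=\alpha\hat{a}/2$ at an outflow face (for $\hat{a}>0$), so when $\alpha=0$ the jump at that face vanishes identically and cannot be controlled by the neighbour; your independence argument then yields only one of the two lift conditions. This is exactly the degenerate upwind case the paper excludes by assuming $\alpha\neq 0$ and discusses after the theorem in \ref{app1}, so you should state the restriction $\alpha\neq 0$ explicitly, but it is not a gap relative to the paper's own proof.
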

\begin{proof}
Suppose that a scheme can be expressed both in FR and FDG form. Then, the two representations for this numerical scheme must admit the same Bloch wave solutions. Consequently, one has
\begin{equation}
    \forall \theta \in [-(p+1)\pi, (p+1)\pi] : \mathbf{H}_{FR}(\theta) = \mathbf{H}_{FDG}(\theta).
\end{equation}
Using Eq.(\ref{eq:ch3_Q_FDG}) and Eq.(\ref{eq:ch3_Q_FR}), this implies
\begin{align}
    (\mathbf{M}+\mathbf{K})^{-1}\mathbf{M}\mathbf{D}
    - \frac{1}{2}\alpha (\mathbf{M}+\mathbf{K})^{-1}\mathbf{r}\mathbf{r}^T
    + \frac{1}{2}(2-\alpha) (\mathbf{M}+\mathbf{K})^{-1} \mathbf{l}\mathbf{l}^T&
    \\ \nonumber
    - \mathbf{D}
    + \frac{1}{2}\alpha(\mathbf{M}+\mathbf{Q})^{-1} \mathbf{r}\mathbf{r}^T
    - \frac{1}{2}(2-\alpha)(\mathbf{M}+\mathbf{Q})^{-1} \mathbf{l}\mathbf{l}^T&
    \nonumber \\
    +
    \left(
    \frac{1}{2}\alpha (\mathbf{M}+\mathbf{K})^{-1} \mathbf{r} \mathbf{l}^T
    -\frac{1}{2}\alpha (\mathbf{M} + \mathbf{Q})^{-1} \mathbf{r} \mathbf{l}^T
    \right) e^{I\theta^h}& \nonumber \\
    +
    \left(
    \frac{1}{2}(2-\alpha) (\mathbf{M} + \mathbf{Q})^{-1} \mathbf{l} \mathbf{r}^{T}
    -\frac{1}{2}(2-\alpha) (\mathbf{M}+\mathbf{K})^{-1} \mathbf{l} \mathbf{r}^T
    \right) e^{-I\theta^h}&
    =
    0.
\end{align}
By linear independence of $\{1,e^{I\theta}, e^{-I\theta}\}$, all terms in parentheses must vanish. Hence, if $\alpha \neq 0$, we conclude that
\begin{align}
    \mathbf{K}\mathbf{D} &= 0 \label{eq:equivalence1}, \\
    (\mathbf{M} + \mathbf{K})^{-1} \mathbf{r} &= (\mathbf{M} + \mathbf{Q})^{-1} \mathbf{r}, \label{eq:equivalence2}\\
    (\mathbf{M} + \mathbf{K})^{-1} \mathbf{l} &= (\mathbf{M} + \mathbf{Q})^{-1} \mathbf{l} \label{eq:equivalence3}.
\end{align}
Eq.(\ref{eq:equivalence1}), Eq.(\ref{eq:equivalence2}) and Eq.(\ref{eq:equivalence3}) form a set of necessary conditions for equivalence between FR and FDG schemes. Sufficiency of the latter can easily be verified by direct substitution in the definition of the schemes.
\end{proof}

\begin{theorem}
    If a given numerical scheme that can be expressed in FR and FDG form is such that $\mathbf{K} = \mathbf{K}^T$ or $\mathbf{J}\mathbf{Q} = \mathbf{Q}\mathbf{J}$, then it must be an ESFR scheme, \textit{ie.}, ESFR schemes are the only numerical schemes satisfying the conditions from Lemma \ref{lem:equivalence} with symmetric filtering operators or symmetric correction functions.
\end{theorem}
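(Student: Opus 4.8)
The plan is to treat the two symmetry hypotheses separately, working throughout in the Legendre basis as is done in the proofs of Theorems~\ref{theo:FDG_conservative} and~\ref{theo:FR_conservative}. The common reduction is that, by Lemma~\ref{lem:equivalence}, any scheme expressible in both forms satisfies $\mathbf{K}\mathbf{D}=0$. In this basis $\mathbf{D}$ is strictly upper triangular, so its range is exactly $\operatorname{span}\{\hat{\mathbf{e}}^0,\dots,\hat{\mathbf{e}}^{p-1}\}$; hence $\mathbf{K}\mathbf{D}=0$ forces the first $p$ columns of $\mathbf{K}$ to vanish, i.e. $\mathbf{K}=\mathbf{w}\,(\hat{\mathbf{e}}^p)^{T}$ for some $\mathbf{w}\in\mathbb{R}^{p+1}$. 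I would also record that $\mathbf{D}^{p}=k_p\,\hat{\mathbf{e}}^0(\hat{\mathbf{e}}^p)^{T}$ (the sole surviving entry of a $p$-fold strictly-upper-triangular product, with $\psi_p^{(p)}=k_p$), so that Eq.~(\ref{eq:ESFR_definition}) reduces to $(\mathbf{K})_{ESFR}=c_p k_p^{2}\,\hat{\mathbf{e}}^p(\hat{\mathbf{e}}^p)^{T}$, the explicit target form. For the first hypothesis the conclusion is then immediate: equating $\mathbf{w}(\hat{\mathbf{e}}^p)^{T}$ with its transpose $\hat{\mathbf{e}}^p\mathbf{w}^{T}$ and reading off column $p$ gives $\mathbf{w}=w_p\hat{\mathbf{e}}^p$, so $\mathbf{K}=\kappa\,\hat{\mathbf{e}}^p(\hat{\mathbf{e}}^p)^{T}$ is ESFR with $c_p=\kappa/k_p^{2}$.

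For the second hypothesis I would first translate the face-matching conditions of Lemma~\ref{lem:equivalence} into constraints on $\mathbf{Q}$. Writing $\mathbf{v}_r:=(\mathbf{M}+\mathbf{Q})^{-1}\mathbf{r}$ and $\mathbf{v}_l:=(\mathbf{M}+\mathbf{Q})^{-1}\mathbf{l}$, the resolvent identity applied to $\mathbf{r}$ and $\mathbf{l}$ yields $(\mathbf{Q}-\mathbf{K})\mathbf{v}_r=(\mathbf{Q}-\mathbf{K})\mathbf{v}_l=0$, and since $\mathbf{K}=\mathbf{w}(\hat{\mathbf{e}}^p)^{T}$ this reads $\mathbf{Q}\mathbf{v}_r=(v_r)_p\,\mathbf{w}$ and $\mathbf{Q}\mathbf{v}_l=(v_l)_p\,\mathbf{w}$; both images are thus proportional to the single vector $\mathbf{w}$. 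Because $\mathbf{l}=-\mathbf{J}\mathbf{r}$ and $\mathbf{J}$ commutes with $\mathbf{M}$ and (by hypothesis) with $\mathbf{Q}$, one gets $\mathbf{v}_l=-\mathbf{J}\mathbf{v}_r$ and $\mathbf{Q}\mathbf{v}_l=-\mathbf{J}(\mathbf{Q}\mathbf{v}_r)$; combined with the two relations above and $(v_l)_p=(-1)^p(v_r)_p$, this forces $\mathbf{J}\mathbf{w}=(-1)^{p+1}\mathbf{w}$. Hence $\mathbf{w}$, and therefore $\mathbf{Q}\mathbf{v}_r$, is supported only on the Legendre indices of the same parity as $p$.

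The crux, and the step I expect to be hardest, is to upgrade this parity restriction to $\mathbf{Q}\mathbf{D}=0$; once that holds, $\mathbf{Q}=\mathbf{Q}^{T}$ together with conservativeness give $\mathbf{Q}=\operatorname{diag}(0,\dots,0,q_0)=(\mathbf{K})_{ESFR}$ exactly as in the first case. Splitting $\mathbb{R}^{p+1}$ into the $\mathbf{J}$-eigenspaces and writing $\mathbf{D}$ in its induced off-diagonal blocks $\mathbf{D}_{+-},\mathbf{D}_{-+}$, the defining EESFR relation Eq.~(\ref{eq:ch6_Q_req2}) couples the parity blocks of $\mathbf{Q}$ as $\mathbf{Q}_{+}\mathbf{D}_{+-}=-\mathbf{D}_{-+}^{T}\mathbf{Q}_{-}$, so that (using symmetry) $\mathbf{Q}\mathbf{D}=0$ is equivalent to the vanishing of the block of $\mathbf{Q}\mathbf{D}$ on the parity opposite to $p$. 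Projecting $\mathbf{Q}\mathbf{v}_r=(v_r)_p\mathbf{w}$ onto that opposite block and simplifying $\mathbf{Q}_{\mathrm{opp}}(\mathbf{M}_{\mathrm{opp}}+\mathbf{Q}_{\mathrm{opp}})^{-1}\mathbf{r}_{\mathrm{opp}}=0$ to $\mathbf{Q}_{\mathrm{opp}}\mathbf{M}_{\mathrm{opp}}^{-1}\mathbf{r}_{\mathrm{opp}}=0$ furnishes exactly the single vector equation produced by the parity step.

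The remaining task is to show that this one vanishing, fed through the coupling above and through conservativeness, annihilates the opposite-parity block of $\mathbf{Q}\mathbf{D}$ for every $p$. For the two-parameter family of Eq.~(\ref{eq:ch6_Q}) this is a one-line check: the opposite-parity equation collapses to $q_1=0$, which via the factor $\beta$ simultaneously removes the $(p-2,p)$ coupling and leaves $\mathbf{Q}=\operatorname{diag}(0,\dots,0,q_0)$, so I would dispose of that case first. For a general admissible $\mathbf{Q}$ I would either invoke the explicit parametrization of the solutions of Eq.~(\ref{eq:ch6_Q_req2}) obtained by \cite{vincent2015extended}, reducing the claim to the vanishing of finitely many parameters, or argue structurally that, because the coupling $\mathbf{Q}_{+}\mathbf{D}_{+-}=-\mathbf{D}_{-+}^{T}\mathbf{Q}_{-}$ ties the two parity blocks rigidly together, the single relation $\mathbf{Q}_{\mathrm{opp}}\mathbf{M}_{\mathrm{opp}}^{-1}\mathbf{r}_{\mathrm{opp}}=0$ already forces the opposite block of $\mathbf{Q}\mathbf{D}$ to vanish. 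Establishing that this closure holds for all $p$, rather than only for the two-parameter family, is where I expect the real work to lie, since the parity argument on its own leaves $\mathbf{w}$ free to sit at a subleading index such as $p-2$ (which would give a symmetric-correction FR scheme that is FDG-expressible but not ESFR); it is precisely the structural condition Eq.~(\ref{eq:ch6_Q_req2}) that must rule this out.
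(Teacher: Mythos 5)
Your first half (the case $\mathbf{K}=\mathbf{K}^{T}$) is correct and coincides with the paper's argument: $\mathbf{K}\mathbf{D}=0$ kills the first $p$ columns of $\mathbf{K}$ in the Legendre basis, and symmetry then leaves only $(\mathbf{K})_{pp}$. For the second half, your route to the parity constraint is sound and in fact cleaner than the paper's: where the paper substitutes the explicit inverse Eq.(\ref{eq:MK_form}) into $\mathbf{J}(\mathbf{M}+\mathbf{K})^{-1}\mathbf{r}=-(\mathbf{M}+\mathbf{K})^{-1}\mathbf{l}$ and compares entries, you obtain the same information from $(\mathbf{Q}-\mathbf{K})\mathbf{v}_r=(\mathbf{Q}-\mathbf{K})\mathbf{v}_l=0$ and the commutation of $\mathbf{J}$ with $\mathbf{M}$ and $\mathbf{Q}$, arriving at $\mathbf{J}\mathbf{w}=(-1)^{p+1}\mathbf{w}$ (you should note in passing that $(v_r)_p\neq0$, which follows from Eq.(\ref{eq:MK_form}) since $(\mathbf{r})_p=1$). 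Both computations yield exactly the statement that $w_i=0$ only for indices $i$ of parity opposite to $p$.

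The gap you flag at that point is genuine, and your proposed repair does not close it within the theorem's hypotheses: Eq.(\ref{eq:ch6_Q_req1}) and Eq.(\ref{eq:ch6_Q_req2}) are defining conditions of the EESFR family, not consequences of the assumption $\mathbf{J}\mathbf{Q}=\mathbf{Q}\mathbf{J}$, so invoking them proves at best a weaker statement about the intersection of the EESFR and SSDG families rather than the theorem as written. You should also be aware that the paper's own proof stops at precisely the same point and simply asserts that the constraint forces $(\mathbf{K})_{pp}$ to be the only non-zero entry; that assertion does not follow from the parity relation, and the obstruction you identified is real. Concretely, for $p=3$ take $\mathbf{K}=\mathbf{Q}=\kappa\,\hat{\mathbf{e}}^{1}(\hat{\mathbf{e}}^{3})^{T}$ in the Legendre basis with $\kappa\neq0$: then $\mathbf{K}\mathbf{D}=0$, the conditions of Lemma \ref{lem:equivalence} hold trivially, $\mathbf{J}\mathbf{Q}=\mathbf{Q}\mathbf{J}$ (the index pair $(1,3)$ has even sum), the first row and column of $\mathbf{Q}$ vanish so the correction functions are symmetric and satisfy the conventional boundary conditions, yet $(\mathbf{M}+\mathbf{K})^{-1}\mathbf{r}=(\tfrac12,\tfrac32-\tfrac{21\kappa}{4},\tfrac52,\tfrac72)$ differs from every ESFR lifting operator. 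So the second branch of the theorem cannot be established from $\mathbf{J}\mathbf{Q}=\mathbf{Q}\mathbf{J}$ alone; the additional structure you instinctively reached for (symmetry of $\mathbf{Q}$, or Eq.(\ref{eq:ch6_Q_req2})) must be added to the hypotheses before the closure argument you sketch can be attempted, and your proposal leaves even that restricted closure unproven for general $p$.
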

\begin{proof}
Without loss of generality, in the following, we assume that all computations are performed in the Legendre basis. In this case, recall that $\mathbf{J}_{ij} = (-1)^{i+1} \delta_{ij}$. We first note that the only possible form for a matrix $\mathbf{K}$ satisfying Eq.(\ref{eq:equivalence1}) is
\begin{equation}
    \forall_{i=0}^p \forall_{j=0}^{p-1} : (\mathbf{K})_{ij} = 0.
    \label{eq:K_constraint}
\end{equation}
Let $\mathbf{K} = \mathbf{K}^T$. Then, the only non-zero entry in $\mathbf{K}$ is $(\mathbf{K})_{pp}$ and hence $\mathbf{K}$ is the usual ESFR filtering matrix. This completes the first part of the proof.

Using Eq.(\ref{eq:K_constraint}), direct computations shows that $(\mathbf{M}+\mathbf{K})^{-1}$ is given by
\begin{align}
    &(\mathbf{M} + \mathbf{K})^{-1} \nonumber
    =\\
    &\begin{bmatrix}
        (\mathbf{M}^{-1})_{00} &0 & & \cdots& -(\mathbf{M}^{-1})_{00}(\mathbf{M}^{-1})_{pp}(\mathbf{K})_{00}/(1+(\mathbf{M}^{-1})_{pp}(\mathbf{K})_{pp}) \\
        0&(\mathbf{M}^{-1})_{11}&0 & \cdots& -(\mathbf{M}^{-1})_{11}(\mathbf{M}^{-1})_{pp}(\mathbf{K})_{11}/(1+(\mathbf{M}^{-1})_{pp}(\mathbf{K})_{pp}) \\
        \vdots&\vdots&\vdots && \vdots \\
        0&0&0&\cdots&(\mathbf{M}^{-1})_{pp}/(1+(\mathbf{M}^{-1})_{pp} (\mathbf{K})_{pp})
    \end{bmatrix}
    \label{eq:MK_form}
\end{align}
Let us now assume that $\mathbf{J}\mathbf{Q} = \mathbf{Q}\mathbf{J}$. Then, knowing that $\mathbf{l} = -\mathbf{J}\mathbf{r}$ and that $\mathbf{J} = \mathbf{J}^{-1}$, we must have
\begin{align}
    (\mathbf{M} + \mathbf{Q})\mathbf{J} = \mathbf{J}(\mathbf{M}+\mathbf{Q})
    &\implies
    \mathbf{J}(\mathbf{M} + \mathbf{Q})^{-1} = (\mathbf{M} + \mathbf{Q})^{-1}\mathbf{J} \\
    &\implies
    \mathbf{J}(\mathbf{M} + \mathbf{Q})^{-1}\mathbf{r} = (\mathbf{M} + \mathbf{Q})^{-1}\mathbf{J}\mathbf{r} \\
    &\implies
    \mathbf{J}(\mathbf{M} + \mathbf{Q})^{-1}\mathbf{r} = -(\mathbf{M} + \mathbf{Q})^{-1}\mathbf{l}
\end{align}
Using Eq.(\ref{eq:equivalence2}) and Eq.(\ref{eq:equivalence3}), this implies that
\begin{equation}
    \mathbf{J}(\mathbf{M} + \mathbf{K})^{-1}\mathbf{r} = -(\mathbf{M} + \mathbf{K})^{-1}\mathbf{l}.
    \label{eq:intermediate}
\end{equation}
Substituting Eq.(\ref{eq:MK_form}) in Eq.(\ref{eq:intermediate}), one obtains
\begin{align}
    &\begin{bmatrix}
        (-1)^1(\mathbf{M}^{-1})_{00} +(-1)^1(\mathbf{M}^{-1})_{00}(\mathbf{M}^{-1})_{pp}(\mathbf{K})_{00}/(1+(\mathbf{M}^{-1})_{pp}(\mathbf{K})_{pp}) \\
        (-1)^2(\mathbf{M}^{-1})_{11} +(-1)^3(\mathbf{M}^{-1})_{11}(\mathbf{M}^{-1})_{pp}(\mathbf{K})_{11}/(1+(\mathbf{M}^{-1})_{pp}(\mathbf{K})_{pp}) \\
        (-1)^3(\mathbf{M}^{-1})_{11} +(-1)^p(\mathbf{M}^{-1})_{22}(\mathbf{M}^{-1})_{pp}(\mathbf{K})_{22}/(1+(\mathbf{M}^{-1})_{pp}(\mathbf{K})_{pp}) \\
        \vdots \\
        (-1)^{p+1}(\mathbf{M}^{-1})_{pp}/(1+(\mathbf{M}^{-1})_{pp} (\mathbf{K})_{pp})
    \end{bmatrix}
    =\nonumber \\
    &\begin{bmatrix}
        (-1)^1(\mathbf{M}^{-1})_{00} + (-1)^0(\mathbf{M}^{-1})_{00}(\mathbf{M}^{-1})_{pp}(\mathbf{K})_{00}/(1+(\mathbf{M}^{-1})_{pp}(\mathbf{K})_{pp}) \\
        (-1)^2(\mathbf{M}^{-1})_{11} +(-1)^1(\mathbf{M}^{-1})_{11}(\mathbf{M}^{-1})_{pp}(\mathbf{K})_{11}/(1+(\mathbf{M}^{-1})_{pp}(\mathbf{K})_{pp}) \\
        (-1)^3(\mathbf{M}^{-1})_{22} +(-1)^2(\mathbf{M}^{-1})_{22}(\mathbf{M}^{-1})_{pp}(\mathbf{K})_{11}/(1+(\mathbf{M}^{-1})_{pp}(\mathbf{K})_{pp}) \\
        \vdots \\
        (-1)^{p+1}(\mathbf{M}^{-1})_{pp}/(1+(\mathbf{M}^{-1})_{pp} (\mathbf{K})_{pp})
    \end{bmatrix}
\end{align}
Clearly, this constraint can only be satisfied if $(\mathbf{K})_{pp}$ is the only non-zero entry of $\mathbf{K}$. Hence $\mathbf{K}$ is the ESFR filetring matrix, which completes the second part of the proof.
\end{proof}

It should be noted that the results derived in this section, strictly speaking, hold for non-upwind numerical fluxes. If an upwind numerical flux is used ($\alpha = 0$), then Eq.(\ref{eq:equivalence2}) or Eq.(\ref{eq:equivalence3}) must be removed from the statement of Lemma \ref{lem:equivalence}. Since Eq.(\ref{eq:equivalence1}) ensures $\mathbf{K}$ has $p+1$ free parameters, the remaining constraint can always be satisfied, no matter the choice of $\mathbf{Q}$. Hence, in this specific case, all FR schemes can be expressed as FDG schemes. In practice, this statement is insignificant since it is limited to a narrow class of numerical fluxes and only makes sense in 1D.

\section{Example of Linear Instability for GSFR Schemes}
\label{app2}
Although GSFR schemes were initially introduced as an extended infinite family of linearly stable FR schemes, here we present a simple numerical experiment showing that these schemes are not stable in the usual sense, even when $\forall_{i=1}^p b_i > 0$. To investigate the linear stability of GSFR schemes, their associated system matrix is constructed, \textit{ie.,}, GSFR schemes are applied to the linear advection problem discussed previously and rewritten as a linear ODE system
\begin{equation}
    \frac{d}{dt}
    \begin{bmatrix}
        \hat{\mathbf{u}}^h_0 \\
        \vdots \\
        \hat{\mathbf{u}}^h_{N-1}
    \end{bmatrix}
    = \mathbf{A}_{sys}
    \begin{bmatrix}
        \hat{\mathbf{u}}^h_0 \\
        \vdots \\
        \hat{\mathbf{u}}^h_{N-1}
    \end{bmatrix},
\end{equation}
where $\mathbf{A}_{sys} \in \mathbb{R}^{(p+1)N \times (p+1)N}$ is defined as the system matrix of the scheme. The stability of any given scheme can then be assessed by computing the eigenvalues of $\mathbf{A}_{sys}$. The eigenvalues of $\mathbf{A}_{sys}$ are shown in Figure \ref{fig:B_GSFR} for a GSFR scheme with $p=3$, a pure upwind numerical flux, $b_1 = 0.03$, $b_2 = 0.03$ and $b_3 = 0.0075$. This scheme is used to solve the linear advection equation on $[-1,1]$ subject to periodic boundary conditions with an advection speed given by $a=2$. The number of elements is fixed to $N=10$. As can be seen, it is clear that some of the eigenvalues of the system matrix have a positive real part, which implies the scheme is unstable. The eigenvalue with the maximum real part was found to be $\lambda = 0.148+18.385 I$.
\begin{figure}
    \centering
    \includegraphics[width=0.7\linewidth]{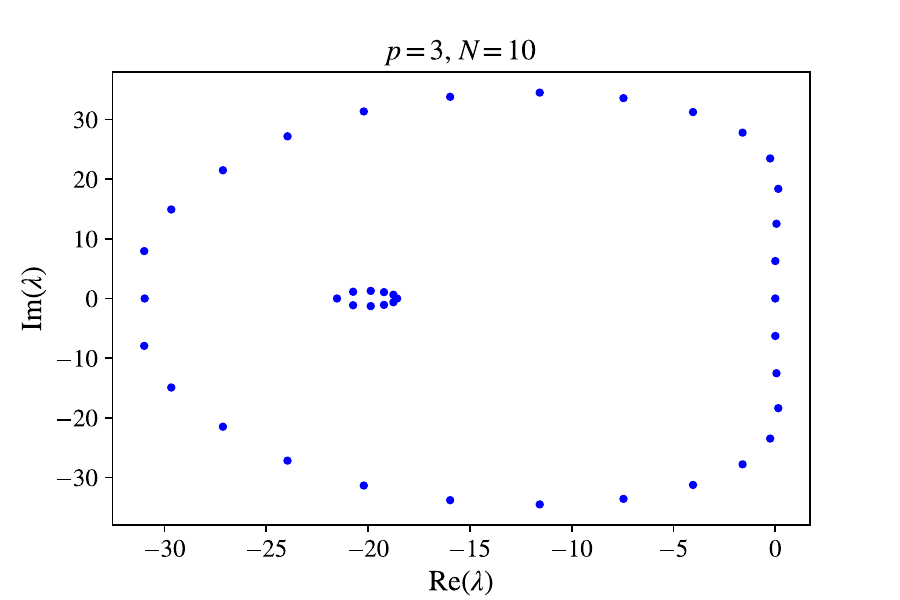}
    \caption{Eigenvalues of the system matrix of a GSFR scheme with $p=3$, $b_1 =\quad 0.03$ $b_2 = 0.03$, $b_3= 7.5 \times 10^{-3}$ and an upwind numerical flux applied to the linear advection equation with $a=2$. 10 elements were used to discretize the interval $[-1,1]$.}
    \label{fig:B_GSFR}
\end{figure}

\bibliographystyle{elsarticle-num} 
\bibliography{cas-refs.bib}

\end{document}